\newtheorem{theorem}{Theorem}[section]
\newtheorem{lemma}[theorem]{Lemma}
\newtheorem{remark}[theorem]{Remark}
\newtheorem{example}[theorem]{Example}%%
\newtheorem{proposition}[theorem]{Proposition}
\newtheorem{corollary}[theorem]{Corollary}
\newtheorem{conjecture}[theorem]{Conjecture}
\theoremstyle{definition}
\newtheorem{definition}[theorem]{Definition}
\numberwithin{equation}{section}
\newcommand{\Q}{\mathbb{Q}}
\newcommand{\Z}{\mathbb{Z}}
\newcommand{\F}{\mathbb{F}}
\newcommand{\ds}{\displaystyle}
\newcommand{\ov}{\overline}
\newcommand{\wt}{\widetilde}
\newcommand{\sst}\scriptstyle
\newcommand{\ft}{\footnotesize}
\newcommand{\ns}\normalsize
\newcommand{\cl}{c\hskip-1pt{\ell}}
\newcommand{\order}{\raise0.8pt \hbox{${\sst \#}$}}
\newcommand{\oorder}{\raise0.8pt \hbox{\tiny${\sst \#}$}}
\newcommand{\lien}{\mathrel{\mkern-4mu}}
\newcommand{\too}{\relbar\lien\rightarrow}
\newcommand{\plus}{\ds\mathop{\raise 2.0pt \hbox{$\bigoplus$}}\limits}
\newcommand{\prd}{\ds\mathop{\raise 2.0pt \hbox{$\prod$}}\limits}
\newcommand{\sm}{\ds\mathop{\raise 2.0pt \hbox{$\sum$}}\limits}
\newcommand{\ffrac}[2]{\hbox{\ft $\displaystyle\frac{#1}{#2}$}}
\newcommand{\limproj}{\mathop{\lim_{\longleftarrow}}}
\newcommand{\Gal}{{\rm Gal}}
\newcommand{\Norm}{{\rm N}}
\newcommand{\J}{{\rm J}}
\newcommand{\pr}{{\rm pr}}
\newcommand{\ab}{{\rm ab}}
\newcommand{\ta}{{\rm ta}}
\newcommand{\rk}{{\rm rk}}
\newcommand{\tor}{{\rm tor}}
\newcommand{\Hom}{{\rm H}}
\newcommand{\Ker}{{\rm Ker}}
\newcommand{\Cha}{{\rm III}}
\newcommand{\es}{{\emptyset}}
\keywords{$p$-class groups, class field theory, Chevalley's formula,
cyclic $p$-extensions}
\subjclass{Primary 11R23; 11R29; 11R37}
\begin{document}

\title[On the $\lambda$-stability of $p$-class groups]
{On the $\lambda$-stability of $p$-class groups \\ along cyclic $p$-towers of a number field}

\author{Georges Gras}
\address{Villa la Gardette, 4, Chemin Ch\^ateau Gagni\`ere,
F-38520 Le Bourg d'Oisans -- \url{http://orcid.org/0000-0002-1318-4414}} 
\email{g.mn.gras@wanadoo.fr}

\date{August 12, 2021}

\begin{abstract}
Let $k$ be a number field, $p \geq 2$ a prime and $S$ a set of tame or wild 
finite places of $k$. We call $K/k$ a totally $S$-ramified cyclic $p$-tower if 
$\Gal(K/k) \simeq \Z/p^N\Z$ and if $S \ne \es$ is totally ramified. 
Using analogues of Chevalley's formula (Gras, Proc. Math. Sci. {\bf 127}(1) (2017)),
we give an elementary proof of a stability theorem (Theorem \ref{mt}) 
for generalized $p$-class groups ${\mathcal X}_n$ of the layers $k_n \subseteq K$:
let $\lambda = \max(0, \oorder S-1-\rho)$ given in Definition \ref{Lambda}; then
$\order {\mathcal X}_n = \order {\mathcal X}_0 \cdot p^{\lambda \cdot n}$ 
for all $n \in [0,N]$, if and only if $\order {\mathcal X}_1 = 
\order {\mathcal X}_0 \cdot p^{\lambda}$. This improves the case $\lambda = 0$ of 
Fukuda (1994), Li--Ouyang--Xu--Zhang (2020), Mizusawa--Yamamoto (2020),
whose techniques are based on Iwasawa's theory or Galois theory of pro-$p$-groups. 
We deduce capitulation properties of ${\mathcal X}_0$ in the tower. Finally we apply our 
principles to the torsion groups ${\mathcal T}_n$ of abelian $p$-ramification theory. 
Numerical examples are given.
\end{abstract}

\maketitle

\tableofcontents

\section{Introduction} 
The behavior of $p$-class groups in a $\Z_p$-extension gives rise to
many theoretical and computational results; a main 
observation is the unpredictability of these groups in the 
first layers of the $\Z_p$-extension, then their regularization
from some not effective level, Iwasawa's theory giving the famous formula
of the orders by means of the invariants $\lambda, \mu, \nu$; 
nevertheless, this algebraic context is not sufficient to estimate these 
parameters (e.g., Greenberg's conjecture \cite{Gre}). 
Our purpose is to see, more generally, what is the behavior of the 
$p$-class groups (and some other arithmetic invariants) in cyclic 
$p$-extensions in which one allows tame ramification. 

\smallskip
Let $k$ be a number field, $p \geq 2$ a prime number and
$K$ a cyclic extension of degree $p^N$ of $k$, $N \geq 1$, 
and let $k_n$, $n \in [0, N]$, be the degree $p^n$ extension of $k$ in $K$. 
We speak of cyclic $p$-towers $K/k$ and put $G = \Gal(K/k)$. 
Let $S$ be the set of places of $k$, ramified in $K/k$. 
{\it We will assume $S \ne \es$ and $S$ totally ramified in $K/k$};
in general, $S$ contains tame places, except if $K$ is contained in 
a $\Z_p$-extension of $k$, in which case $S$ is a set of 
$p$-places of $k$. Due to the very nature of a tower, there will never be 
``complexification=ramification'' of infinite real places.

\smallskip
Let ${\mathfrak m} \ne 0$ be an integer ideal of $k$, 
of support $T$ {\it disjoint from $S$};
let ${\mathcal C}_{k,{\mathfrak m}}$ and ${\mathcal C}_{k_n,{\mathfrak m}}$, 
denoted simply ${\mathcal C}_{0,{\mathfrak m}}$ 
and ${\mathcal C}_{n,{\mathfrak m}}$, be the $p$-Sylow subgroups 
of the ray class groups modulo ${\mathfrak m}$, of $k$ and $k_n$, respectively
(for $n >0$, ${\mathfrak m}$ is seen as extended ideal in $k_n$).
The class of an ideal ${\mathfrak a}$ of $k_n$ is denoted 
$\cl_{n,{\mathfrak m}}({\mathfrak a})$.

\smallskip
 Let $\Norm_{K/k_n}$ be the 
arithmetic norm in $K/k_n$, defined on class groups from norms of ideals.
Since ${\mathfrak m}$ is prime to $S \ne \es$ totally ramified, the corresponding 
$p$-Hilbert's ray class fields $H_{n,{\mathfrak m}}$, of the $k_n$'s
are linearly disjoint from the relative $p$-towers $K/k_n$; since, by class 
field theory, $\Norm_{K/k_n}$ corresponds to the restriction of automorphisms 
$\Gal(H_{N,{\mathfrak m}}/K) \to \Gal(H_{n,{\mathfrak m}}/k_n)$,
we get $\Norm_{K/k_n} ({\mathcal C}_{N,{\mathfrak m}}) = 
{\mathcal C}_{n,{\mathfrak m}}$, for all $n \in [0, N]$.

\smallskip
Let ${\mathcal H}_N$ be a sub-$\Z_p[G]$-module
of ${\mathcal C}_{N,{\mathfrak m}}$;
we may represent {\it a minimal set} of $\Z_p[G]$-generators of ${\mathcal H}_N$
with prime ideals ${\mathfrak Q}_1, \ldots, {\mathfrak Q}_t \notin S \cup T$,
${\mathfrak Q}_i \mid {\mathfrak q}_i$ in $K/k$.
Let ${\mathcal I}_N$ be the $\Z[G]$-module generated by
these ideals; thus ${\mathcal I}_0 := \Norm_{K/k}({\mathcal I}_N)$
is of minimal $\Z$-rank 
$t$ since $\Norm_{K/k}({\mathcal I}_N) \otimes \Q =  
\langle {\mathfrak q}_1, \ldots, {\mathfrak q}_t \rangle_\Z \otimes \Q$. 
We set, for all $n \in [0, N]$:
\begin{equation}\label{DEF}
{\mathcal I}_n := \Norm_{K/k_n}({\mathcal I}_N), \  \ 
{\mathcal H}_n := \Norm_{K/k_n}({\mathcal H}_N), \  \ 
{\mathcal X}_{n,{\mathfrak m}} := {\mathcal C}_{n,{\mathfrak m}}/{\mathcal H}_n.
\end{equation}

This defines the family $\{{\mathcal X}_{n,{\mathfrak m}}\}_{n \in [0, N]}$
(``generalized $p$-class groups'') such that:
$$\hbox{$\Norm_{K/k_n} ({\mathcal X}_{N,{\mathfrak m}}) = 
{\mathcal X}_{n,{\mathfrak m}}$, for all $n \in [0, N]$.} $$
Any place ${\mathfrak Q}$ of $K$, such that 
$\cl_{N,{\mathfrak m}}({\mathfrak Q}) \in {\mathcal H}_N$, totally
splits in the subfield of $H_{N,{\mathfrak m}}$ fixed by the image 
of ${\mathcal H}_N$ in $\Gal(H_{N,{\mathfrak m}}/K)$; 
so this general definition allows to
enforce decomposition conditions in the ray class fields.

\smallskip
To simplify, we shall remove the indices ${\mathfrak m}$, except necessity. 

\smallskip
One speaks of ``$\lambda$-stability'' of these ${\mathcal X}_n$ 
along the cyclic $p$-tower $K/k$ when there exists $\lambda \geq 0$ such that
$\order {\mathcal X}_n = \order {\mathcal X}_0 \cdot p^{\lambda \cdot n}$, 
for all $n \in[0, N]$. Of course, this is not a workable definition, both theoretically
(classical context of $\Z_p$-extensions or less familiar case of $p$-towers with tame
ramification) and computationally (inability to use PARI/GP \cite{PARI} 
beyond some level $n = 3$ or $4$, and almost nothing for $p > 5$).
So we intend to get an accessible criterion likely to give information in all the tower.
 
\begin{definition}\label{Lambda}
Let $\Lambda := \{x \in k^\times,\, x \equiv 1\! \pmod {\mathfrak m},\ 
(x) \in \Norm_{K/k}({\mathcal I}_N)\}$. For instance, ${\mathcal I}_N=1$
yields $\{{\mathcal X}_n\} = \{{\mathcal C}_n\}$ and $\Lambda = E$, the group 
of units $\varepsilon \equiv 1 \!\! \pmod {\mathfrak m}$ of~$k$.
We have the exact sequence $1 \to \Lambda/E \to {\mathcal I}_0 \to {\mathcal H}_0 \to 1$.
Since $\Lambda/E$ is $\Z$-free (of $\Z$-rank $t$ because ${\mathcal H}_0$ is finite), one can 
write, with representatives $\alpha_j$ of $\Lambda/E$:
\begin{equation*}
\Lambda = \tor_{\Z}(E) \plus \langle \varepsilon_1, \ldots , \varepsilon_r;
\alpha_1, \ldots , \alpha_t \rangle_\Z =: \tor_\Z(E) \plus X, 
\ \, \varepsilon_i \in E,
\end{equation*}
$r := r_1+r_2-1$, where $(r_1, r_2)$ is the signature of $k$,
and where $X$ is $\Z$-free of $\Z$-rank $\rho := r+t$;  
we define the parameter $\lambda := \max\,(0, \order S -1 - \rho)$.

\smallskip
We shall see that the norm properties, in $K/k$, of $\tor_\Z(E) \otimes \Z_p$ 
are specific and may give some obstructions that are
elucidated by the following Lemma.
\end{definition}

\begin{lemma}(Albert's Theorem for $K/k$; for a proof, see \cite[Exercise I.6.2.3]{Gra3}).\label{albert} 
If $k$ contains  the group $\mu_{p^\epsilon}$ of $p^\epsilon$th roots of unity,
then $\mu_{p^\epsilon} \subset \Norm_{K/k}(K^\times)$ if and only if 
there exists a degree $p^\epsilon$ cyclic extension $L/K$ such that $L/k$ is cyclic.
\end{lemma}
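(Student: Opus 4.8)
The plan is to run the argument through Kummer theory, using that $\mu_{p^\epsilon} \subseteq k \subseteq K$: every cyclic extension of $K$ of degree dividing $p^\epsilon$ has the form $K(\beta)$ with $\beta^{p^\epsilon} = a \in K^\times$, and its Galois behaviour over $k$ is governed by the module $\langle a \rangle \subseteq K^\times/(K^\times)^{p^\epsilon}$ under $G := \Gal(K/k) = \langle \sigma \rangle$, of order $p^N$. Fix a primitive $p^\epsilon$th root of unity $\zeta \in k$.

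For the implication ``$L$ exists $\Rightarrow \mu_{p^\epsilon} \subseteq \Norm_{K/k}(K^\times)$'', I would write $L = K(\beta)$, $\beta^{p^\epsilon} = a$, so that $\Gal(L/k) = \langle \wt\sigma \rangle$ is cyclic of order $p^{N+\epsilon}$ with $\wt\sigma|_K = \sigma$ and $\Gal(L/K) = \langle \wt\sigma^{\,p^N} \rangle$ of order $p^\epsilon$; after replacing $a$ by a power prime to $p$ one may assume $\wt\sigma^{\,p^N}(\beta) = \zeta\beta$. Since $\mu_{p^\epsilon} \subseteq k$, Kummer theory forces the abelian group $\Gal(L/k)$ to fix $a$ in $K^\times/(K^\times)^{p^\epsilon}$, that is $\sigma(a) = a\,w^{p^\epsilon}$ for some $w \in K^\times$; then $\wt\sigma(\beta)^{p^\epsilon} = \sigma(a) = (\beta w)^{p^\epsilon}$, and, adjusting $\wt\sigma$ within its coset modulo $\Gal(L/K)$, one may take $\wt\sigma(\beta) = \beta w$. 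The telescoping identity $\wt\sigma^{\,i}(\beta) = \beta \prod_{l=0}^{i-1}\sigma^l(w)$ gives $\wt\sigma^{\,p^N}(\beta) = \beta\,\Norm_{K/k}(w)$; comparing with $\wt\sigma^{\,p^N}(\beta) = \zeta\beta$ yields $\Norm_{K/k}(w) = \zeta$, hence $\langle \zeta \rangle = \mu_{p^\epsilon} \subseteq \Norm_{K/k}(K^\times)$.

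For the converse, I would choose $u \in K^\times$ with $\Norm_{K/k}(u) = \zeta$; since $\Norm_{K/k}(u^{p^\epsilon}) = \zeta^{p^\epsilon} = 1$, Hilbert's Theorem 90 for the cyclic extension $K/k$ produces $a \in K^\times$ with $\sigma(a)/a = u^{p^\epsilon}$. Multiplying $a$ by an element of $k^\times$ leaves $\sigma(a)/a$ unchanged, and as $k^\times \not\subseteq (K^\times)^p$ one may arrange $a \notin (K^\times)^p$, so that $X^{p^\epsilon} - a$ is irreducible over $K$. Put $L := K(\beta)$, $\beta^{p^\epsilon} = a$, of degree $p^\epsilon$ over $K$; the relation $\sigma(a) = a\,u^{p^\epsilon}$ shows that $\wt\sigma|_K := \sigma$, $\wt\sigma(\beta) := \beta u$, defines $\wt\sigma \in \mathrm{Aut}(L/k)$, and the telescoping identity gives $\wt\sigma^{\,p^N}(\beta) = \beta\,\Norm_{K/k}(u) = \zeta\beta$; thus $\wt\sigma$ has order $p^{N+\epsilon} = [L:k]$, so $L/k$ is Galois with $\Gal(L/k) = \langle\wt\sigma\rangle$ cyclic and $[L:K] = p^\epsilon$, as wanted.

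I expect the only genuinely delicate point to be the degree control in the converse: one must make sure the Kummer radical $a$ can be chosen outside $(K^\times)^p$, since otherwise $L/K$ would collapse to a proper subextension and $\wt\sigma$ would have order $< [L:k]$. This is where $k^\times \not\subseteq (K^\times)^p$ enters, and it is enough to correct $a$ by a $\pi \in k^\times$ whose valuation is prime to $p$ at a finite place of $k$ unramified in $K$ (for instance a generator of $\mathfrak{p}_1 \mathfrak{p}_2^{-1}$ for two such primes lying in one ideal class); note also that for $p = 2$, $\epsilon \geq 2$, the condition $a \notin (K^\times)^2$ already forces $a \notin -4(K^\times)^4$ because $-4 \in (K^\times)^2$. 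The remaining ingredients — the Kummer-theoretic criterion for when $L/k$ is abelian, the normalization of $\wt\sigma$ modulo $\Gal(L/K)$, and the telescoping identity — are routine.
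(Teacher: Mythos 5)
The paper offers no proof of this lemma, deferring entirely to \cite[Exercise I.6.2.3]{Gra3}; your Kummer-theoretic argument --- Hilbert 90 producing the radical $a$ from $u$ with $\Norm_{K/k}(u)=\zeta$ in one direction, and in the other the $G$-invariance of the Kummer class of $a$ (forced by $\mu_{p^\epsilon}\subseteq k$ together with the commutativity of $\Gal(L/k)$) combined with the telescoping identity $\wt\sigma^{\,p^N}(\beta)=\beta\,\Norm_{K/k}(w)$ --- is exactly the intended solution and is correct, including the two points that genuinely require care: arranging $a\notin(K^\times)^p$ by multiplying by some $c\in k^\times\setminus(K^\times)^p$ (which exists, e.g.\ via your $\mathfrak p_1\mathfrak p_2^{-1}$ device at primes unramified in $K/k$), and the $-4(K^\times)^4$ caveat for $p=2$, $\epsilon\geq 2$, correctly dispatched since $i\in K$. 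One cosmetic remark: after renormalizing $\wt\sigma$ within its coset modulo $\Gal(L/K)$, the quantity $\wt\sigma^{\,p^N}(\beta)/\beta=\Norm_{K/k}(w)$ is a priori only \emph{some} primitive $p^\epsilon$th root of unity (it equals $\zeta$ itself only when $\epsilon\leq N$), but as $\Norm_{K/k}(K^\times)$ is a group this still gives $\mu_{p^\epsilon}\subseteq\Norm_{K/k}(K^\times)$, so nothing is lost.
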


\begin{remark}\label{torsion}
We shall restrict ourselves to the case
$\tor_\Z(E) \otimes \Z_p \subset \Norm_{K/k}(K^\times)$.
If necessary, it suffices to restrict the tower $K/k$ to the sub-tower $K'/k$ 
such that $[K : K']$ be the order of the group of $p$th 
roots of unity of $k$, or to notice that $L/K/k$ does exist (recall that no arithmetic
condition is required on $L/K$). 

\smallskip
Thus, the norm properties of $\Lambda$ are assigned to $X$.
\end{remark}
 
We put to simplify ${\mathcal C}_0 =: {\mathcal C}$, ${\mathcal H}_0 =: {\mathcal H}$, 
${\mathcal I}_0 =: {\mathcal I}$, ${\mathcal X}_0 =: {\mathcal X}$, 
and so on, ${\mathfrak m}$ being implied; let $p^e$ be the exponent of ${\mathcal X}$. 
Under the assumptions on $S$, $T$, 
we will prove the following result for the cyclic $p$-tower $K/k$ of degree $p^N$
(see Theorem \ref{mt} and Corollary \ref{capitulation} for more complete statements):

\smallskip\noindent
{\bf Main Result.}
{\it Assume that $\tor_\Z(E) \subset \Norm_{K/k}(K^\times)$ (Remark \ref{torsion}).
Let $\rho \geq 0$ be the $\Z$-rank of the  $\Z$-module $X$ and let 
$\lambda := \max\,(0, \order S -1 - \rho)$ (Definition \ref{Lambda}).
Then $\order {\mathcal X}_n = \order {\mathcal X} \cdot p^{\lambda \cdot n}$ 
for all $n \in [0, N]$), if and only if $\order {\mathcal X}_1 = 
\order {\mathcal X} \cdot p^{\lambda}$. If the criterion applies with $\lambda = 0$
(i.e., $\order {\mathcal X}_1 = \order {\mathcal X}$)
and if $e \leq N$, then ${\mathcal X}$ capitulates in $K$.}
%${\mathcal X}_n = {\mathcal X}_n^{\Gal(k_n/k)}$ for all $n \in [0, N]$ 

\begin{remark}\label{lambda=0} 
({\bf a}) If $\lambda = \max\,(0, \order S -1 - \rho)$ 
fulfills the criterion of $\lambda$-stability from the base field $k$, it fulfills 
the same $\lambda$-stability criterion in any relative $p$-tower $K/k'$,
$k' := k_{n_0}$, $n_0 \in[0, N[$, since (with ${\mathcal X}'_n := {\mathcal X}_{n_0+n}$,
$n \geq 0$) we get $\order {\mathcal X}'_n = \order {\mathcal X}' \cdot
p^{\lambda \cdot n}$ since
$\order {\mathcal X}'_n = \order {\mathcal X}_{n_0+n} =
\order {\mathcal X} \cdot p^{\lambda\,(n_0+n)} = 
(\order {\mathcal X} \cdot  p^{\lambda\,n_0}) \cdot p^{\lambda\,n} =
\order {\mathcal X}'\cdot p^{\lambda\,n}$.

\smallskip\noindent
\quad ({\bf b})
If $\lambda = \max\,(0, \order S -1 - \rho)$ is not suitable for the criterion 
in $k_1/k$ (which shall be equivalent to $\order {\mathcal X}_1 > 
\order {\mathcal X} \cdot p^{\lambda}$ due to Chevalley's formula), 
we can consider a relative tower $K/k'$; then, in $k'$,
$\order S' = \order S$, $t'=t$, and $\lambda' := \max\,(0, \order S -1 - \rho')$ 
is a strictly decreasing function of $[k' : k]$ (indeed, from Definition \ref{Lambda},
we compute that $\rho' - \rho = r'-r = (r_1+r_2)\cdot ([k' : k]-1)$), so that two 
cases may arise:

\smallskip
\quad (i) For some $k' \subset K$, we have $\order {\mathcal X}'_1 = 
\order {\mathcal X}' \cdot p^{\lambda'}$ giving the $\lambda'$-stability in $K/k'$
with regular linear increasing orders from $k'$.

\smallskip
\quad (ii) Whatever $k' \subset K$, $\lambda'$ is not suitable 
(i.e., $\order {\mathcal X}'_1 > \order {\mathcal X}' \cdot p^{\lambda'}$, even if
$\lambda' = 0$, which occurs rapidly for $k'$ high enough in the tower), which 
means that $\order {\mathcal X}_n$ is strictly increasing from some level $n_0$,
which is illustrated by numerical examples and can define a 
non-linear increasing (in a $\Z_p$-extension $\wt k$ this means ``$\wt \mu \ne 0$'').
It is important to note that a $\wt \lambda$-stability may exist from some level, 
with $\wt \lambda > \lambda'$; this is the case in a $\Z_p$-extension $\wt k$ with
Iwasawa's invariants $\wt \lambda \ne 0$ and $\wt \mu=0$.

\smallskip
Numerical experiments are out of reach as soon as $n > 3$ or $4$, but a 
reasonable heuristic is a tendency to stabilization in totally real $p$-towers, 
by analogy with the study of Greenberg's conjecture carried out in \cite{Gra2}.
In a different framework, let $k_n$ be the $n$th layer of the cyclotomic 
$\Z_p$-extension $k_\infty$ of $k$ and let $C_n$ be the whole class group of $k_n$;
then, for $\ell \ne p$, $\order (C_n \otimes \Z_\ell)$ stabilizes in $k_\infty$ 
(a deep result in the abelian case \cite{Wa}, extended in \cite{FS} 
to Iwasawa context in the $\Z_p \times \Z_\ell$-extension of $k$).

\smallskip\noindent
\quad ({\bf c}) For $k$ fixed, $\lambda$ is unbounded
(which only depends on the tower via $S$) and the condition 
$\order {\mathcal C}_1 = \order {\mathcal C}$
of the literature ($\lambda = t = 0$, thus $\order S \leq r_1+r_2$) 
is empty as soon as $\lambda \geq 1$, whence the 
interest of the factor $p^\lambda$ to get examples whatever $S$.
For known results (all relative to $\lambda = 0$), one may cite 
Fukuda \cite{Fu} using Iwasawa's theory, 
Li--Ouyang--Xu--Zhang \cite[\S\,3]{LOXZ} working in a non-abelian 
Galois context, in Kummer towers, via the use of the fixed points formulas 
\cite{Gra0, Gra1}, then Mizusawa \cite{Miz} above $\Z_2$-extensions, 
and Mizusawa--Yamamoto \cite{MY} for generalizations, including 
ramification and splitting conditions, via the Galois theory of pro-$p$-groups.

\smallskip\noindent
\quad ({\bf d}) If Lemma \ref{albert} does not apply, some counterexamples 
can arise. For instance, let $k=\Q(\sqrt{-55})$, $p=2$, $K = k (\mu_{17})$ of degree 
$2^4$ over $k$ ($-1 \notin \Norm_{K/k}(K^\times)$, $\order S = 2$, $\lambda=1$); 
the successive  $2$-structures are ${\sf [4], [8], [16], [32], [32]}$, for which 
$\order {\mathcal C}_n = \order {\mathcal C} \cdot 2^n$ holds for $n \in [0, 3]$, 
since $-1$ is norm in $k_3/k$, but not for $n=4$.
\end{remark}

To conclude, we will show that the behavior, in $K/k$, of the finite torsion groups 
${\mathcal T}_n$ of $p$-ramification theory (as ``dual'' invariants of $p$-class groups) 
gives much strongly increasing groups with possibly non-zero $\wt \mu$-invariants
in $\Z_p$-extensions $\wt k/k$.

\section{Around Chevalley's ambiguous class number formula}
The well-known Chevalley's formula \cite[p.\,402]{Che} 
is the pivotal element for a great lot of ``fixed points formulas''. 
For the ordinary sense, it takes into account the complexification
of real infinite places, which does not occur for us as explained in 
the Introduction. 
Let $L/F$ be a cyclic $p$-extension of Galois group $G$, $S$-ramified, 
non-complexified if $p=2$, and let $e_v(L/F)$ be the ramification 
index of $v \in S$; we have
$\order {\mathcal C}_L^G = \ffrac{\order {\mathcal C}_F \cdot 
\prod_{v \in S} e_v(L/F)} {[L : F] \cdot 
(E_F : E_F \cap \Norm_{L/F}(L^\times))}$, where ${\mathcal C}_F$,
${\mathcal C}_L$ denote the $p$-class groups of $F$, $L$, respectively,
and $E_F$ the group of units of $F$. 

\smallskip
Various generalizations of this formula were given (Gras \cite[Th\'eor\`eme 2.7, 
Corollaire 2.8]{Gra0}, \cite[Theorem 3.6, Corollaries 3.7, 3.9]{Gra1}, Jaulent
\cite[Chap. II, \S\,3]{Jau0}); mention their recent idelic proof by
Li--Yu \cite[Theorem 2.1 $\&$ \S\,2.3, Examples]{LY}. We shall use the 
following one for the layers of a $S$-ramified cyclic $p$-tower $K/k$:

\begin{proposition}
Assume $S \ne \es$ totally ramified in $L/F$.
Let  ${\mathfrak m}$, of support $T$ disjoint from $S$, be a modulus of $F$
and let ${\mathcal C}_F$, ${\mathcal C}_L$, be the $p$-Sylow subgroups of the
ray class groups modulo ${\mathfrak m}$, of $F$ and $L$, respectively. 
For any $\Z[G]$-module ${\mathcal I}_L$, of prime-to-$T \cup S$ ideals of $L$, 
defining ${\mathcal H}_L := \cl_L({\mathcal I}_L)$, ${\mathcal H}_F :=
\Norm_{L/F}({\mathcal H}_L) = \cl_F(\Norm_{L/F}({\mathcal I}_L))$, 
we have, with $\Lambda_{L/F} := \{x \in F^\times,\, x \equiv 1\!\! \pmod {\mathfrak m},\, 
(x) \in \Norm_{L/F}({\mathcal I}_L)\}$:
\begin{equation}\label{totram}
\order ({\mathcal C}_L /{\mathcal H}_L)^G
= \order( {\mathcal C}_F /{\mathcal H}_F) \times \frac{(\order G)^{\oorder S - 1}}
{(\Lambda_{L/F} : \Lambda_{L/F} \cap \Norm_{L/F}(L^\times))} .
\end{equation}
\end{proposition}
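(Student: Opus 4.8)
The plan is to derive the generalized formula \eqref{totram} from the classical Chevalley formula quoted just above, by a reduction to the case ${\mathcal H}_L = 1$ applied to a suitable auxiliary field, together with genus-theory bookkeeping for ray class groups. First I would recall the classical statement in the ray class setting: for $L/F$ cyclic of degree $p^\nu$, $S$-ramified (non-complexified if $p=2$), with modulus ${\mathfrak m}$ of support $T$ disjoint from $S$, one has
\begin{equation*}
\order {\mathcal C}_L^G = \order {\mathcal C}_F \cdot \frac{\prod_{v \in S} e_v(L/F)}{[L:F] \cdot (E_{F,{\mathfrak m}} : E_{F,{\mathfrak m}} \cap \Norm_{L/F}(L^\times))},
\end{equation*}
where $E_{F,{\mathfrak m}}$ is the group of units $\equiv 1 \pmod{\mathfrak m}$; this is the ${\mathcal I}_L = 1$, $\Lambda_{L/F} = E_{F,{\mathfrak m}}$ instance of what we want, once we use that $S$ totally ramified forces $\prod_{v\in S} e_v(L/F) = (\order G)^{\order S}$ (each $e_v = \order G$) and note $(\order G)^{\order S}/\order G = (\order G)^{\order S - 1}$.

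Next I would handle the quotient by ${\mathcal H}_L$. The clean way is to pass to the fixed field: let $M$ be the subfield of the ray class field $H_{L,{\mathfrak m}}$ fixed by the image of ${\mathcal H}_L$ under the Artin map, so $\Gal(M/L) \simeq {\mathcal C}_L/{\mathcal H}_L$; because ${\mathcal H}_L$ is a $\Z[G]$-module, $M/F$ is Galois, and since $S$ is prime to ${\mathfrak m}$ and totally ramified in $L/F$, the ray class field tower is linearly disjoint from $L/F$, so $\Gal(M/F)$ is an extension of $G$ by ${\mathcal C}_L/{\mathcal H}_L$ and the inertia at each $v\in S$ still maps isomorphically onto $G$. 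Then $({\mathcal C}_L/{\mathcal H}_L)^G$ is computed by applying the cohomological/genus-theoretic argument behind Chevalley's formula to the module ${\mathcal C}_L/{\mathcal H}_L$ in place of ${\mathcal C}_L$: the Herbrand quotient computation replaces $E_{F,{\mathfrak m}}$ by the group $\Lambda_{L/F}$ of $x \in F^\times$, $x \equiv 1 \pmod{\mathfrak m}$, with $(x) \in \Norm_{L/F}({\mathcal I}_L)$ — precisely the principal ideles of $F$ that become principal-from-${\mathcal I}_L$ after norm — and the local contribution $\prod_{v\in S} e_v$ is unchanged because the ramification data at $S$ is unaffected by quotienting out ${\mathcal H}_L$ (the places in $T\cup S$ were excluded from the support of ${\mathcal I}_L$). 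This yields
\begin{equation*}
\order ({\mathcal C}_L/{\mathcal H}_L)^G = \order ({\mathcal C}_F/{\mathcal H}_F) \cdot \frac{(\order G)^{\order S - 1}}{(\Lambda_{L/F} : \Lambda_{L/F} \cap \Norm_{L/F}(L^\times))},
\end{equation*}
using that $\Norm_{L/F}({\mathcal C}_L/{\mathcal H}_L) = {\mathcal C}_F/{\mathcal H}_F$ (which follows, as in the Introduction, from surjectivity of $\Norm_{L/F}$ on ray class groups since $H_{F,{\mathfrak m}}$ is linearly disjoint from $L/F$).

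Alternatively, and perhaps more transparently for a self-contained \emph{elementary} proof, I would run the snake-lemma argument directly on the exact sequence of $G$-modules $1 \to {\mathcal I}_L \to {\mathcal D}_{L} \to {\mathcal P}_{L,{\mathfrak m}} \to 1$ relating ideals, ray-principal ideals and ${\mathcal C}_{L,{\mathfrak m}}$, take $G$-cohomology, and feed in the idelic/local computation of $\Hom^1(G, {\mathcal D}_L^{(T\cup S)})$ and $\Hom^i(G, L^\times_{\mathfrak m})$; the group $\Lambda_{L/F}$ emerges as the kernel of the natural map on $F$-side ray-principal ideals modulo norms. The main obstacle I expect is the careful tracking of the modulus ${\mathfrak m}$ through every cohomology group — i.e.\ making sure the congruence condition $x \equiv 1 \pmod{\mathfrak m}$ is compatibly imposed on units, on $\Lambda_{L/F}$, and on the local terms at $v \mid {\mathfrak m}$ — and verifying that the local factor at places dividing ${\mathfrak m}$ contributes trivially (which uses $T \cap S = \es$ and that ${\mathfrak m}$ is unramified in $L/F$), so that the only surviving local contributions are the $e_v(L/F) = \order G$ for $v \in S$. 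Everything else is the standard Chevalley/genus-theory package, now carried out for the quotient module ${\mathcal C}_L/{\mathcal H}_L$ and recorded with the $\Lambda_{L/F}$ notation introduced in Definition \ref{Lambda}.
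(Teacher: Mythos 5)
Your strategy is the right one and, in substance, it is the paper's: the paper does not prove this Proposition, but presents it as the totally--ramified specialization of the generalized ambiguous--class--number formulas of \cite{Gra0,Gra1} (see also \cite{Jau0} and \cite{LY}), and your first steps --- restating Chevalley's formula for ray class groups with the units $\equiv 1 \pmod{\mathfrak m}$ in place of $E_F$, and observing that total ramification of $S$ (with $T\cap S=\es$, so no ramification at $T$) turns $\prod_{v\in S}e_v(L/F)\big/[L:F]$ into $(\order G)^{\oorder S-1}$ --- are exactly that specialization; your use of $S\ne\es$ totally ramified and prime to ${\mathfrak m}$ to get $\Norm_{L/F}({\mathcal H}_L)={\mathcal H}_F$ and the surjectivity of the norm on ray class groups is also what the paper needs. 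Where your write-up falls short of a proof is precisely at the one step that is not ``standard package'': the claim that in the Herbrand-quotient computation $E_{F,{\mathfrak m}}$ gets replaced by $\Lambda_{L/F}$ is the entire content of the generalization, and you assert it twice without deriving it. The mechanism is to take $G$-cohomology of $1\to P_{L,{\mathfrak m}}\,{\mathcal I}_L\to D_L\to {\mathcal C}_L/{\mathcal H}_L\to 1$, where $D_L$ is the group of prime-to-$T$ ideals of $L$ and $P_{L,{\mathfrak m}}$ the ray-principal ones; since $H^1(G,D_L)=1$, the error term is $H^1(G,P_{L,{\mathfrak m}}{\mathcal I}_L)$, and it is in analysing this module (via Hilbert's Theorem 90 applied to the presentation of $P_{L,{\mathfrak m}}{\mathcal I}_L$ by elements of $L^\times$ congruent to $1$ mod ${\mathfrak m}$ together with ${\mathcal I}_L$) that the group $\Lambda_{L/F}=\{x\in F^\times,\ x\equiv 1\pmod{\mathfrak m},\ (x)\in\Norm_{L/F}({\mathcal I}_L)\}$ appears in place of the unit group; this is carried out in \cite[Theorem 3.6 and Corollary 3.7]{Gra1}, which is what the paper actually invokes. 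Two smaller points: the exact sequence you display in your alternative argument, $1\to{\mathcal I}_L\to{\mathcal D}_L\to{\mathcal P}_{L,{\mathfrak m}}\to 1$, is not exact as written (${\mathcal I}_L$ is not the kernel of the map onto ray-principal ideals) and should be replaced by the sequence above; and the reduction via the fixed field $M\subseteq H_{L,{\mathfrak m}}$ of the image of ${\mathcal H}_L$ is a useful picture for identifying $({\mathcal C}_L/{\mathcal H}_L)$ with $\Gal(M/L)$, but it does not by itself transport Chevalley's argument, which is run on ideal classes rather than on Galois groups.
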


\begin{lemma} \label{inclusion} 
Let $K/k$ be a cyclic $p$-tower of degree $p^N$, of Galois group 
$G =: \langle \sigma \rangle$. Let ${\mathcal H}_N =: \cl_N ({\mathcal I}_N)
\subseteq {\mathcal C}_N$ and $\Lambda := 
\{x \in k^\times,\, x \equiv 1\! \pmod {\mathfrak m},\, (x) \in \Norm_{K/k}({\mathcal I}_N)\}$
(see Definition \ref{Lambda}):

\smallskip
(i) $\Lambda = \{x \in k^\times,\, x \equiv 1\! \pmod {\mathfrak m},\, 
(x) \in \Norm_{k_n/k}({\mathcal I}_n)\}$ (i.e., $\Lambda_{K/k} = \Lambda_{k_n/k}$), 

\smallskip
(ii) $\Lambda \subseteq \Lambda_1 := \{x_1 \in k_1^\times,\, 
x_1 \equiv 1\! \pmod {\mathfrak m},\, (x_1) \in \Norm_{K/k_1}({\mathcal I}_N)\}$.
\end{lemma}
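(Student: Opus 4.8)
The plan is to reduce both statements to transitivity of the arithmetic norm on ideals together with its compatibility with the Galois action. For (i), I would invoke the definition \eqref{DEF} that $\mathcal{I}_n = \Norm_{K/k_n}(\mathcal{I}_N)$ and transitivity of ideal norms to get $\Norm_{k_n/k}(\mathcal{I}_n) = \Norm_{k_n/k}\big(\Norm_{K/k_n}(\mathcal{I}_N)\big) = \Norm_{K/k}(\mathcal{I}_N)$; since the congruence condition $x \equiv 1 \pmod{\mathfrak m}$ and the ambient group $k^\times$ are unchanged, the two sets describing $\Lambda_{K/k}$ and $\Lambda_{k_n/k}$ are literally identical, so $\Lambda = \Lambda_{k_n/k}$.

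For (ii), let $x \in \Lambda$ and fix $\mathfrak{A} \in \mathcal{I}_N$ with $(x) = \Norm_{K/k}(\mathfrak{A})$ as ideals of $k$. Set $\mathfrak{B} := \Norm_{K/k_1}(\mathfrak{A}) \in \mathcal{I}_1$; transitivity gives $\Norm_{k_1/k}(\mathfrak{B}) = (x)$. The crux is to identify the ideal generated by $x$ in $k_1$ inside the $\Z[G]$-module $\mathcal{I}_1$. For this I would use the elementary identity, valid for every ideal $\mathfrak{b}$ of $k_1$ because $k_1/k$ is Galois of degree $p$, that $\prod_{\tau \in \Gal(k_1/k)} \mathfrak{b}^\tau = \Norm_{k_1/k}(\mathfrak{b})\,\mathcal{O}_{k_1}$ (verified prime by prime, separately in the split, inert and ramified cases). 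Applied to $\mathfrak{B}$, this yields $(x)\,\mathcal{O}_{k_1} = \Norm_{k_1/k}(\mathfrak{B})\,\mathcal{O}_{k_1} = \prod_{\tau} \mathfrak{B}^\tau$.

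It then remains to check that each $\mathfrak{B}^\tau$ belongs to $\mathcal{I}_1$. Lifting $\tau$ to $\widetilde{\tau} \in G = \Gal(K/k)$ and using that the ideal norm commutes with automorphisms --- $\Norm_{K/k_1}(\mathfrak{A})^\tau = \Norm_{K/k_1}(\mathfrak{A}^{\widetilde{\tau}})$, again seen on primes via residue degrees --- we obtain $\mathfrak{B}^\tau = \Norm_{K/k_1}(\mathfrak{A}^{\widetilde{\tau}})$ with $\mathfrak{A}^{\widetilde{\tau}} \in \mathcal{I}_N$ since $\mathcal{I}_N$ is a $\Z[G]$-module. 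As $\mathcal{I}_1 = \Norm_{K/k_1}(\mathcal{I}_N)$ is closed under multiplication, the product $\prod_\tau \mathfrak{B}^\tau = (x)\,\mathcal{O}_{k_1}$ lies in $\mathcal{I}_1$, i.e. $(x) \in \Norm_{K/k_1}(\mathcal{I}_N)$; combined with $x \equiv 1 \pmod{\mathfrak m}$ (which persists in $k_1$), this gives $x \in \Lambda_1$.

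I expect the only step requiring care to be the identification $(x)\,\mathcal{O}_{k_1} = \prod_{\tau \in \Gal(k_1/k)} \mathfrak{B}^\tau$ --- i.e. recognizing that the extended principal ideal is the full Galois orbit-product of $\mathfrak{B}$, not $\mathfrak{B}$ itself --- and the attendant stability of $\mathcal{I}_1$ under $\Gal(k_1/k)$; both become transparent once one localizes at each prime, so I anticipate no real obstacle, the rest being formal manipulation of norms.
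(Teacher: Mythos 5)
Your argument is correct and is essentially the paper's: part (i) is the same appeal to transitivity of the norm, and for part (ii) your identity $(x)\mathcal{O}_{k_1}=\prod_{\tau\in\Gal(k_1/k)}\Norm_{K/k_1}(\mathfrak{A}^{\widetilde\tau})=\Norm_{K/k_1}(\mathfrak{A}^{\Theta})$, with $\Theta=1+\sigma+\cdots+\sigma^{p-1}$ a set of lifts of $\Gal(k_1/k)$, is exactly the one-line computation $(x)=\Norm_{K/k}(\mathfrak{A})=\Norm_{K/k_1}(\mathfrak{A}^{\Theta})$ used in the paper, followed by the same observation that $\mathfrak{A}^{\Theta}\in\mathcal{I}_N$ because $\mathcal{I}_N$ is a $\Z[G]$-module. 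Your extra care in identifying the extended principal ideal with the Galois orbit-product is a sound (if more pedestrian) way of justifying that identity.
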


\begin{proof}
Point (i) comes from ${\mathcal I}_n = \Norm_{K/k_n} ({\mathcal I}_N)$.
Let $x \in \Lambda$ seen in $k_1$; then $(x) = \Norm_{K/k}({\mathfrak A}) = 
\Norm_{K/k_1}({\mathfrak A}^{\Theta})$, where $\Theta=1+\sigma+ \cdots + \sigma^{p-1}$,
and ${\mathfrak A}^{\Theta} \in {\mathcal I}_N$
since ${\mathcal I}_N$ is a $\Z[G]$-module, whence (ii).
Notice that $(x) \in \Norm_{K/k}({\mathcal I}_N)$ expresses that $x$ is 
local norm in $K/k$ at every $v \notin S$; if moreover $x$
is local norm at $S$ it is a global one (Hasse norm theorem).
\end{proof}

In what follows, throughout the article, only the invariant 
$\rho = \rk_\Z(X)$ is needed (Definition \ref{Lambda} giving 
$\lambda := \max\,(0, \order S -1 - \rho)$) and never 
${\mathcal I}_N$, $\Lambda$,\,$\ldots$ However, $\rho = r + t$
may be unknown because of the minimal number $t$ of generators 
of the $\Z[G]$-module ${\mathcal I}_N$ (or the $\Z$-module ${\mathcal I}$) 
if it is too general; but, in practice, $\rho$ is known (e.g., 
$\rho = r_1+r_2-1$ for ray class groups modulo ${\mathfrak m}$)
or $\rho$ is large enough to get $\lambda = 0$.
 
\begin{lemma} \label{free}
Let $X = \langle x_1, \ldots , x_\rho \rangle_\Z$ be a free $\Z$-module 
of $\Z$-rank $\rho \geq 0$, let $\Omega := (\Z/p^n \Z)^{\delta}$, $n \geq 1$, 
$\delta \geq 0$, and let $f : X \to \Omega$ be an homomorphism 
such that the image of $X \to \Omega/\Omega^p$ 
is of $\F_p$-dimension $\min\,(\rho,\delta)$. Then $\order f(X) =  
p^{\min\,(\rho,\delta) \cdot n}$ and for all $m \in [0, n]$, $\order f^{p^m}(X) = 
p^{\min\,(\rho,\delta) \cdot (n-m)}$ (where $f^{p^m}(x) := f(x)^{p^m}$).
\end{lemma}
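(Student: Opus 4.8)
The plan is to reduce everything to the structure theorem for finitely generated abelian groups together with a dimension count over $\F_p$. First I would observe that since $X$ is $\Z$-free of rank $\rho$, the quotient $X/pX$ is an $\F_p$-vector space of dimension $\rho$, and the composite map $\bar f : X/pX \to \Omega/\Omega^p$ is well defined because $f(pX) = f(X)^p \subseteq \Omega^p$. By hypothesis $\dim_{\F_p} \mathrm{im}(\bar f) = \min(\rho,\delta)$; call this number $s$. Pick $x_1,\ldots,x_s$ among a $\Z$-basis of $X$ (after a $\mathrm{GL}_\rho(\Z)$ change of basis) whose images in $X/pX$ map to a basis of $\mathrm{im}(\bar f)$, and note that $f(X)$ is generated as a $\Z$-module by $f(x_1),\ldots,f(x_\rho)$, but $f(x_{s+1}),\ldots,f(x_\rho)$ already lie in $f(X)^p + 0$ modulo the span of the first $s$ — more precisely, Nakayama's lemma over the local ring $\Z_p$ (or a direct induction on $p$-power filtration of $\Omega$) shows $f(X) = \langle f(x_1),\ldots,f(x_s)\rangle_\Z$.

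Next I would bound $\order f(X)$ from above and below. Since $f(X) \subseteq \Omega \simeq (\Z/p^n\Z)^\delta$, every element has order dividing $p^n$, so $f(X)$ is generated by $s$ elements each of order at most $p^n$, giving $\order f(X) \le p^{sn}$. For the lower bound, consider the iterated maps $X \xrightarrow{\bar f} \Omega/\Omega^p$ versus the maps induced on $\Omega^{p^{n-1}}/\Omega^{p^n} \simeq \Omega/\Omega^p$: the hypothesis on $\bar f$, combined with the fact that multiplication by $p^{n-1}$ is surjective $\Omega/\Omega^p \to \Omega^{p^{n-1}}$, forces $f(X) \cap \Omega^{p^{n-1}}$ to have $\F_p$-dimension at least $s$ inside $\Omega^{p^{n-1}}/\Omega^{p^n}$. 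Running this filtration argument through all $n$ layers $\Omega^{p^j}/\Omega^{p^{j+1}}$, $j = 0,\ldots,n-1$, each layer contributes a factor $p^s$, so $\order f(X) \ge p^{sn}$. Combining, $\order f(X) = p^{sn} = p^{\min(\rho,\delta)\cdot n}$.

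For the second assertion, I would apply exactly the same reasoning to the composite $f^{p^m} = (x \mapsto f(x)^{p^m})$, which factors as $X \xrightarrow{f} \Omega \xrightarrow{p^m} \Omega^{p^m}$, where $\Omega^{p^m} \simeq (\Z/p^{n-m}\Z)^\delta$. The key point is that the reduced map $X \to \Omega^{p^m}/\Omega^{p^{m+1}} \simeq \Omega/\Omega^p$ agrees with $\bar f$ up to the isomorphism $u \mapsto u^{p^m}$ on the target (since $p^m : \Omega/\Omega^p \to \Omega^{p^m}/\Omega^{p^{m+1}}$ is an isomorphism), hence its image still has $\F_p$-dimension $s = \min(\rho,\delta)$. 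The first part of the lemma, applied with $n$ replaced by $n-m$ and $\Omega$ replaced by $\Omega^{p^m}$, then yields $\order f^{p^m}(X) = p^{\min(\rho,\delta)\cdot(n-m)}$.

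I expect the main obstacle to be making the filtration/dimension-count in the lower bound fully rigorous rather than hand-wavy: one must check carefully that the $\F_p$-dimension of $f(X)$ in each successive quotient $\Omega^{p^j}/\Omega^{p^{j+1}}$ is \emph{exactly} $s$, not merely at least $s$, which is where the \emph{minimality} built into $\min(\rho,\delta)$ (the image cannot be larger than $\delta$-dimensional) and the exact-sequence bookkeeping interact. Equivalently, one can sidestep this by invoking the structure theorem directly: $f(X)$ is a subgroup of $(\Z/p^n\Z)^\delta$ generated by $s$ elements, hence $f(X) \simeq \bigoplus_{i=1}^{s} \Z/p^{n_i}\Z$ with $n_i \le n$, and the surjectivity onto an $s$-dimensional subspace of $\Omega/\Omega^p$ forces $s$ of the $n_i$ to be nonzero while the dimension hypothesis being exactly $\min(\rho,\delta)$ together with $f(X)^{p^{n-1}} \ne 1$ (deduced from the same surjectivity applied in the top layer) forces every $n_i = n$; this is the cleaner route and I would write it that way.
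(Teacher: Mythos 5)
Your argument is correct, but it takes a genuinely different route from the paper's. The paper lifts $f$ to a map $F:X\to M$ with $M\simeq\Z_p^\delta$ free, and splits into two cases according to whether $\ov f$ is surjective ($\rho\ge\delta$: a successive-approximation argument gives $f(X)=\Omega$) or injective ($\rho<\delta$: one checks $F(X)$ is a pure submodule, hence a direct summand of $M$, so that $f(X)$ is a direct summand of $\Omega$ isomorphic to $(\Z/p^n\Z)^\rho$); the formula for $\order f^{p^m}(X)$ then falls out of this explicit structure. You avoid both the lifting and the case distinction: the upper bound $\order f(X)\le p^{sn}$ with $s:=\min(\rho,\delta)$ comes from the fact that $f(X)$ needs at most $s$ generators (at most $\rho$ since $X$ has rank $\rho$, at most $\delta$ since every subgroup of $(\Z/p^n\Z)^\delta$ is $\delta$-generated), and the lower bound from the observation that $u\mapsto u^{p^j}$ induces an isomorphism $\Omega/\Omega^p\simeq\Omega^{p^j}/\Omega^{p^{j+1}}$, so each of the $n$ graded pieces of the filtration of $f(X)$ by $f(X)\cap\Omega^{p^j}$ has order at least $p^s$. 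Your worry about needing \emph{exactly} $s$ in each layer is unfounded: ``at least $s$'' per layer combined with the global upper bound already forces equality everywhere, and in fact forces $f(X)\simeq(\Z/p^n\Z)^s$, which settles the $f^{p^m}$ assertion as well. The only point to tighten is the phrasing of your final ``cleaner route'': $f(X)^{p^{n-1}}\ne 1$ by itself only forces \emph{one} invariant $n_i$ to equal $n$; what you need, and what your top-layer computation actually delivers, is $\order f(X)^{p^{n-1}}=p^s$, which together with the $s$-generator bound forces all $n_i=n$. Net comparison: your proof is more elementary and uniform (pure finite abelian group theory, no passage to $\Z_p$-modules), while the paper's version produces the direct-summand description of $f(X)$ inside $\Omega$ as a by-product.
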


\begin{proof} 
From $f : X \to \Omega$, let $\ov f : X/X^p \to \Omega/\Omega^p$;
by assumption, $\dim_{\F_p}({\rm Im}(\ov f)) = \min\,(\rho,\delta)$.
Let $M = \langle e_1, \ldots , e_\delta \rangle_{\Z_p}$ be a free
$\Z_p$-module of $\Z_p$-rank $\delta$, and replace  $\Omega$ by $M/M^{p^n}$.
Let $q : X \to X/X^p$, $\pi_n : M \to M/M^{p^n}$, $\pi : M/M^{p^n} \to M/M^p$,
and $\pi_1 = \pi \circ \pi_n : M \to M/M^p$, be the canonical maps. Then, let
 $F : X \to M$ be any map such that $\pi_n \circ F = f$.
We have, since $n \geq 1$, the commutative diagram:
\unitlength=0.42cm
$$\vbox{\hbox{\hspace{-1.4cm}  
\begin{picture}(12.5,7.0)
%     horizontales
\put(5,6.5){\line(1,0){2.0}}
%fl¬èches
\bezier{120}(6.8,6.4)(7,6.5)(7,6.5)
\bezier{120}(6.8,6.6)(7,6.5)(7,6.5)
%     verticale
\put(7.85,4.1){\line(0,1){1.9}}
%fl¬èches
\bezier{80}(7.95,4.2)(7.85,4)(7.85,4)
\bezier{80}(7.75,4.2)(7.85,4)(7.85,4)
\bezier{80}(7.95,4.4)(7.85,4.2)(7.85,4.2)
\bezier{80}(7.75,4.4)(7.85,4.2)(7.85,4.2)
%     horizontales
\put(5,0.5){\line(1,0){2.0}}
%fl¬èches
\bezier{120}(6.8,0.4)(7,0.5)(7,0.5)
\bezier{120}(6.8,0.6)(7,0.5)(7,0.5)
%     verticales
\put(4.0,1){\line(0,1){5.0}}
%fl¬èches
\bezier{80}(4.1,1.2)(4,1)(4,1)
\bezier{80}(3.9,1.2)(4.0,1)(4.0,1)
\bezier{80}(4.1,1.4)(4,1.2)(4,1.2)
\bezier{80}(3.9,1.4)(4.0,1.2)(4.0,1.2)
\put(7.85,1){\line(0,1){1.9}}
%fl¬èches
\bezier{80}(7.95,1.2)(7.85,1)(7.85,1)
\bezier{80}(7.75,1.2)(7.85,1)(7.85,1)
\bezier{80}(7.95,1.4)(7.85,1.2)(7.85,1.2)
\bezier{80}(7.75,1.4)(7.85,1.2)(7.85,1.2)
\put(4.8,6.3){\line(1,-1){2.1}}
%fl¬èche pench¬ée
\put(6.475,4.0){$\searrow$}
\put(5.7,4.35){$\sst f$}
\put(2.8,6.3){\ft$X\!\! \simeq\! \Z^\rho$}
\put(3.0,0.3){\ft$X/X^p$}
\put(7.1,6.3){\ft$M \! \simeq\! \Z_p^\delta$}
\put(7.2,0.3){\ft$\Omega/\Omega^p \! \simeq\! M/M^p$}
\put(6.6,3.25){\ft$\Omega \! \simeq\! M/M^{p^n}$}
\put(5.7,6.67){$\sst F$}
\put(5.7,0.80){$\sst \ov f$}
\put(8.0,5){$\pi_n$}
\put(8.0,2){$\pi$}
\put(3.5,3.4){$q$}
\bezier{300}(9.8,6.3)(11.8,3.5)(9.8,0.7)
\put(10.9,3.4){$\pi_1$}
\end{picture}}}$$
\unitlength=1.0cm

(i) Case $\rho \geq \delta$ (surjectivity of $\ov f$). Let $y \in M$; there
exists $x \in X$ such that $\pi_1(y) = \ov f(q(x)) = \pi_1(F(x))$, whence
$y = F(x)\cdot y'^p$, $y' \in M$; so, by a finite induction, $M=F(X)\cdot M^{p^n}$
and $\pi_n(M) =  \Omega = f(X)$ is of order $p^{\delta \cdot n}$. 

\smallskip
(ii) Case $\rho < \delta$ (injectivity of $\ov f$). Let $y \in M$ such
that $y^p = F(x)$, $x \in X$; then $1 = \pi_1(y^p) = \pi_1 (F(x)) = \ov f(q(x))$,
whence $q(x)=1$ and $x = x'^p$, $x' \in X$, giving $y^p = F(x')^p$ in $M$ thus
$y = F(x')$, proving that $F(X)$ is direct factor in $M$; thus 
$f(X)= \pi_n(F(X))$ is a direct factor, in $\Omega$, isomorphic 
to $(\Z/p^n \Z)^\rho$.

\smallskip
Since $f^{p^m}(X) = f(X)^{p^m}$, this gives $f(X)^{p^m} = \Omega^{p^m}
\simeq ((\Z/p^n \Z)^\delta)^{p^m}$ in case (i) and $((\Z/p^n \Z)^\rho)^{p^m}$
in case (ii). Whence the orders.
\end{proof}

\section{Introduction of Hasse's norm symbols -- Main theorem}
Let $K/k$ be a $S$-ramified cyclic $p$-tower of degree $p^N$, $N \geq 1$. 
Let $\omega_{k_n/k}$ be the map which 
associates with $x \in \Lambda$ the family 
of Hasse's norm symbols $\big( \ffrac{x \, ,\, k_n/k}{v}\big) \in I_v(k_n/k)$
(inertia groups of $v \in S$), where $\Lambda := 
\{x \in k^\times,\ x \equiv 1 \pmod {\mathfrak m}, \, 
(x) \in \Norm_{K/k}({\mathcal I}_N)\}$. Since $x$ is local norm at 
the places $v \notin S$, the image of $\omega_{k_n/k}$ is contained in
$\Omega_{k_n/k} := \big \{ (\tau_v)_{v \in S} \in \bigoplus_{v \in S} 
I_v(k_n/k), \ \, \prod_{v \in S} \tau_v= 1 \big\}$ (from the product formula); 
then $\Ker(\omega_{k_n/k}) = \Lambda \cap \Norm_{k_n/k}(k_n^\times)$.  

\smallskip
Assume $S \ne \es$ totally ramified; so, fixing any $v_0 \in S$,
$\Omega_{k_n/k} \simeq \bigoplus_{v \ne v_0} G_n$, with 
$G_n := \Gal(k_n/k)$. We consider formula \eqref{totram} 
in $k_n/k$, using Lemma \ref{inclusion}\,(i): 
\begin{equation} \label{CG}
\order {\mathcal X}_n^{G_n} = \order  {\mathcal X} \cdot \ffrac{p^{n \,(\oorder S-1)}}
{(\Lambda : \Lambda \cap \Norm_{k_n/k}(k_n^\times))} = 
\order  {\mathcal X} \cdot \ffrac{p^{n \,(\oorder S-1)}}{\order\omega_{k_n/k}(\Lambda)},
\end{equation} 

\noindent
where ${\mathcal X}_n := {\mathcal C}_n/{\mathcal H}_n$ for the family 
$\{{\mathcal H}_n\}$
(cf. \eqref{DEF} with a prime-to-$S$ modulus ${\mathfrak m}$). 

\begin{theorem} \label{mt}
Set $\Lambda =: \tor_\Z(E) \plus X$,
with $X = \langle x_1, \ldots,  x_\rho \rangle_\Z$
free of $\Z$-rank $\rho$, and let $\lambda := \max\,(0, \order S -1 - \rho)$.
Assume that $\tor_\Z(E) \subset \Norm_{K/k}(K^\times)$ 
(see Lemma~\ref{albert} and Remark \ref{torsion}).
We then have the following properties of $\lambda$-stability:

\smallskip
(i) If $\order {\mathcal X}_1 = \order {\mathcal X} \!\cdot p^{\lambda}$,
then $\order {\mathcal X}_n = \order {\mathcal X} \cdot 
p^{\lambda \cdot n}$ and ${\mathcal X}_n^{G_n} = {\mathcal X}_n$,
for all $n \in [0, N]$.

\smallskip
(ii) If $\order {\mathcal X}_1 = \order {\mathcal X} \!\cdot\! p^{\lambda}$,
then $\J_{k_n/k}({\mathcal X}) = {\mathcal X}_n^{p^n}$ and
$\Ker(\J_{k_n/k}) = \Norm_{k_n/k}({\mathcal X}_n [p^n])$, for all $n \in [0,N]$, 
where the $\J_{k_n/k}$'s are the transfer maps in $k_n/k$ and 
${\mathcal X}_n [p^n] := \{x \in {\mathcal X}_n,\ x^{p^n}=1 \}$. 
If moreover, $\lambda = 0$, the norm maps $N_{k_n/k} : {\mathcal X}_n \to \!\!\!\!\! \to {\mathcal X}$ 
are isomorphisms.
\end{theorem}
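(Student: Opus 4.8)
The plan is to reduce everything to the behaviour of the norm-symbol map $\omega_{k_n/k}$ on the $\Z$-free part $X$, via the Chevalley-type formula \eqref{CG}. First I would observe that under the hypothesis $\tor_\Z(E)\subset\Norm_{K/k}(K^\times)$ the torsion contributes nothing: by Lemma~\ref{albert} (applied through Remark~\ref{torsion}) the obstruction from $p^\epsilon$th roots of unity disappears all the way up the tower, so $\tor_\Z(E)\subset\Norm_{k_n/k}(k_n^\times)$ for every $n$, and hence $\order\omega_{k_n/k}(\Lambda)=\order\omega_{k_n/k}(X)$ and \eqref{CG} becomes $\order{\mathcal X}_n=\order{\mathcal X}\cdot p^{n(\oorder S-1)}/\order\omega_{k_n/k}(X)$. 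Now apply Lemma~\ref{free} with $\Omega=\Omega_{k_n/k}\simeq(\Z/p^n\Z)^{\oorder S-1}$ (so $\delta=\oorder S-1$ and exponent $p^n$), $f=\omega_{k_n/k}$ restricted to $X$: the key point is that the composite $X\to\Omega_{k_n/k}\to\Omega_{k_n/k}/\Omega_{k_n/k}^{\,p}$ factors through $X\to\Omega_{k_1/k}$, since reduction mod $p$ of the $k_n/k$ norm symbols is exactly the $k_1/k$ norm symbols. Therefore the $\F_p$-rank of the image of $\omega_{k_n/k}$ on $X/X^p$ equals that of $\omega_{k_1/k}$ on $X/X^p$, a quantity independent of $n$, call it $d$. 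Lemma~\ref{free} (with $m=0$, $n$ replaced by $n$) then gives $\order\omega_{k_n/k}(X)=p^{dn}$ for all $n$, once we know $d=\min(\rho,\oorder S-1)$.

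The hypothesis $\order{\mathcal X}_1=\order{\mathcal X}\cdot p^\lambda$ is precisely what pins down $d$. Indeed \eqref{CG} at $n=1$ reads $\order{\mathcal X}_1=\order{\mathcal X}\cdot p^{\oorder S-1}/\order\omega_{k_1/k}(X)$, and $\order\omega_{k_1/k}(X)=p^d$ because $\Omega_{k_1/k}$ has exponent $p$; so the assumption forces $p^d=p^{\oorder S-1-\lambda}$, i.e. $d=\oorder S-1-\lambda=\oorder S-1-\max(0,\oorder S-1-\rho)=\min(\rho,\oorder S-1)$, which is exactly the Lemma~\ref{free} hypothesis. Feeding this back, $\order\omega_{k_n/k}(X)=p^{\min(\rho,\oorder S-1)\cdot n}$, hence $\order{\mathcal X}_n=\order{\mathcal X}\cdot p^{(\oorder S-1-\min(\rho,\oorder S-1))\cdot n}=\order{\mathcal X}\cdot p^{\lambda n}$, proving the order statement of (i). For the statement ${\mathcal X}_n^{G_n}={\mathcal X}_n$: by \eqref{CG} applied with $k_n$ replaced by any intermediate relation, or more directly by comparing $\order{\mathcal X}_n^{G_n}$ (Chevalley in $k_n/k$) with $\order{\mathcal X}_n$, the two are equal, and since ${\mathcal X}_n^{G_n}\subseteq{\mathcal X}_n$ finite, equality of orders gives equality. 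The converse direction of (i) is immediate: $\lambda$-stability for all $n$ includes $n=1$.

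For (ii), the plan is to use the direct-factor structure coming from Lemma~\ref{free}(ii) together with the exact hexagon relating norm and transfer on ambiguous classes. Since ${\mathcal X}_n^{G_n}={\mathcal X}_n$ by (i), every class in ${\mathcal X}_n$ is $G_n$-invariant, so the transfer $\J_{k_n/k}:{\mathcal X}\to{\mathcal X}_n$ composed with $\Norm_{k_n/k}$ is raising to the $p^n$-th power on ${\mathcal X}$, and $\Norm_{k_n/k}\circ\J_{k_n/k}$ on... rather, $\Norm_{k_n/k}:{\mathcal X}_n\to\!\!\to{\mathcal X}$ is surjective (already known) with the transfer as a partial section after multiplication by $p^n$. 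Concretely: for $\bar a\in{\mathcal X}_n$, $\Norm_{k_n/k}(\bar a)^{?}$—here I would invoke the standard identity $\J_{k_n/k}\circ\Norm_{k_n/k}=\prod_{g\in G_n}g=$ (multiplication by $p^n$ on invariants) and $\Norm_{k_n/k}\circ\J_{k_n/k}=$ multiplication by $p^n$ on ${\mathcal X}$, to get $\J_{k_n/k}({\mathcal X})={\mathcal X}_n^{p^n}$ and $\Ker(\J_{k_n/k})$ as the stated $p^n$-torsion image. The hard part is making the kernel computation $\Ker(\J_{k_n/k})=\Norm_{k_n/k}({\mathcal X}_n[p^n])$ precise: one shows $\Norm_{k_n/k}({\mathcal X}_n[p^n])\subseteq\Ker\J$ using the product-of-conjugates identity, and the reverse inclusion by counting—$\order\Ker\J_{k_n/k}=\order{\mathcal X}/\order{\mathcal X}^{p^n}$ on one hand, and $\order\Norm_{k_n/k}({\mathcal X}_n[p^n])$ computed from the exact sequences $1\to{\mathcal X}_n[p^n]\to{\mathcal X}_n\to{\mathcal X}_n^{p^n}\to 1$ and $\Norm_{k_n/k}$ restricted there, matches. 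When $\lambda=0$ we have $\order{\mathcal X}_n=\order{\mathcal X}$ with $\Norm_{k_n/k}$ surjective between finite groups of equal order, hence an isomorphism. I expect the main obstacle to be bookkeeping the transfer/norm identities on ${\mathcal X}_n$ (a quotient of a ray class group by a $\Z_p[G]$-submodule, not the full class group), ensuring that $\J_{k_n/k}$ and $\Norm_{k_n/k}$ descend compatibly to the ${\mathcal X}_n$ and that the order counts for $\Ker\J$ and $\Norm_{k_n/k}({\mathcal X}_n[p^n])$ genuinely agree.
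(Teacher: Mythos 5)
There is a genuine gap at the heart of your argument for (i): you read the Chevalley-type formula \eqref{CG} as computing $\order{\mathcal X}_n$, when it computes $\order{\mathcal X}_n^{G_n}$, the order of the subgroup of ambiguous classes only. Your chain $\order{\mathcal X}_n=\order{\mathcal X}\cdot p^{n(\oorder S-1)}/\order\omega_{k_n/k}(X)$ is therefore not available. What \eqref{CG} together with Lemma \ref{free} actually yields is only the lower bound $\order{\mathcal X}_n\geq\order{\mathcal X}_n^{G_n}=\order{\mathcal X}\cdot p^{\lambda n}$ (and here your identification $\dim_{\F_p}\omega_{k_1/k}(X)=\min(\rho,\oorder S-1)$ from the $n=1$ hypothesis is correct in substance, once one adds the sandwich $\order{\mathcal X}\cdot p^{\lambda}\leq\order{\mathcal X}_1^{G_1}\leq\order{\mathcal X}_1=\order{\mathcal X}\cdot p^{\lambda}$). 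Your subsequent justification of ${\mathcal X}_n^{G_n}={\mathcal X}_n$ ``by comparing $\order{\mathcal X}_n^{G_n}$ with $\order{\mathcal X}_n$'' is circular: the number you compare with is the one you obtained from the fixed-point formula itself, so the needed inequality $\order{\mathcal X}_n\leq\order{\mathcal X}_n^{G_n}$ is never established. That upper bound is the actual content of the theorem. The paper obtains it by applying the fixed-point formula a \emph{second} time, to $k_n/k_1$ with group $G_n^p$: using $\Lambda\subseteq\Lambda_1$ (Lemma \ref{inclusion}\,(ii)), the functorial identity $\omega'_n=\omega_n^p$ on $X$, Lemma \ref{free} applied to $\omega_n^p$, and --- crucially --- the hypothesis $\order{\mathcal X}_1=\order{\mathcal X}\cdot p^{\lambda}$ as the base term of that formula, one gets $\order{\mathcal X}_n^{G_n^p}\leq\order{\mathcal X}\cdot p^{\lambda n}=\order{\mathcal X}_n^{G_n}$, hence ${\mathcal X}_n^{G_n}={\mathcal X}_n^{G_n^p}$; this rewrites as ${\mathcal X}_n^{1-\sigma_n}={\mathcal X}_n^{(1-\sigma_n)\theta}$ with $\theta=1+\sigma_n+\cdots+\sigma_n^{p-1}$ lying in the maximal ideal $(p,1-\sigma_n)$ of $\Z_p[G_n]$, and Nakayama gives ${\mathcal X}_n^{1-\sigma_n}=1$, i.e.\ ${\mathcal X}_n={\mathcal X}_n^{G_n}$, from which the order formula follows. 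None of this second comparison appears in your plan; your parenthetical ``\eqref{CG} applied with $k_n$ replaced by any intermediate relation'' gestures at it but is not carried out.

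For (ii) your outline is workable but heavier than necessary: once ${\mathcal X}_n={\mathcal X}_n^{G_n}$ is in hand, the identity $\J_{k_n/k}(\Norm_{k_n/k}(y))=y^{\nu_n}=y^{p^n}$ for every $y\in{\mathcal X}_n$, combined with surjectivity of $\Norm_{k_n/k}$, gives both $\J_{k_n/k}({\mathcal X})={\mathcal X}_n^{p^n}$ and $\Ker(\J_{k_n/k})=\Norm_{k_n/k}({\mathcal X}_n[p^n])$ directly, with no order counting. But since (ii) rests entirely on (i), the gap above must be filled first.
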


\begin{proof}
Put $\Norm_n := \Norm_{k_n/k}$, $\J_n := \J_{k_n/k}$,
$G_n =: \langle \sigma_n \rangle$,
$\Omega_n := \Omega_{k_n/k}$, $\omega_n := \omega_{k_n/k}$, 
$s := \order S$. By assumption, $\omega_n (\Lambda)=\omega_n(X)$
for all $n \in [1,N]$. From \eqref{CG},
$\order {\mathcal X}_1^{G_1} = \order {\mathcal X} \cdot \ffrac{p^{s-1}}
{\order \omega_1(X)} \geq \order {\mathcal X} \cdot p^\lambda$
since $\order \omega_1(X) \leq p^{\min\,(\rho, s-1)}$; thus, 
with $\order {\mathcal X}_1 = \order {\mathcal X} \cdot p^{\lambda}$, 
we obtain ${\mathcal X}_1^{G_1} = {\mathcal X}_1$,
whence $\order {\mathcal X}_1^{G_1} = 
\order {\mathcal X} \cdot p^\lambda$ and $\order \omega_1(X) 
= p^{\min\,(\rho, s-1)}$.

\smallskip
By restriction of Hasse's symbols, $\omega_1 = \pi \circ \omega_n$,
with $\pi : \Omega_n \to \Omega_n/\Omega_n^p \simeq G_1^{s - 1}$.
Since we have proven that $\dim_{\F_p}(\omega_1(X) )= {\min\,(\rho, s-1)}$,
Lemma \ref{free} applies to $X=\langle x_1, \ldots, x_\rho \rangle_\Z$,
$\delta = s-1$, $f = \omega_n$; so, $\order \omega_n(X) = 
p^{\min\,(\rho, s-1)\, n}$ and, from \eqref{CG}:
\begin{equation}\label{Gn}
\order {\mathcal X}_n^{G_n} = \order {\mathcal X} \cdot p^{\lambda \cdot n},
 \  \hbox{for all $n \in [0, N]$} .
\end{equation}

Consider the extension $k_n/k_1$, of Galois group 
$G_n^p = \langle \sigma_n^p \rangle$, and the corresponding map
$\omega'_n$ on $k_1^\times$ with values in  $\Omega_{k_n/k_1} \simeq 
\Omega_n^p$; then $\order {\mathcal X}_n^{G_n^p} = \order {\mathcal X}_1\cdot 
\ffrac{p^{(n-1) \,(s-1)}}{\order \omega'_n (\Lambda_1)}$,
$\Lambda_1 := \{x \in k_1^\times,\, x \equiv 1\! \pmod {\mathfrak m},\, 
(x) \in \Norm_{K/k_1}({\mathcal I}_N)\}$. 
Since $\Lambda \subseteq \Lambda_1$ (Lemma \ref{inclusion}\,(ii)), 
the functorial properties of Hasse's symbols on $X$
imply (where $v_1 \mid v$ in $k_1$):
$$\omega'_n(x_i) = \Big(\Big(\ffrac{x_i, k_n/k_1}{v_1}\Big)\Big)_{\!v} = 
\Big(\Big(\ffrac{\Norm_1(x_i), k_n/k}{v}\Big)\Big)_{\!v} 
= \Big(\Big(\ffrac{x_i^p, k_n/k}{v}\Big)\Big)_{\!v}=
\Big(\Big(\ffrac{x_i, k_n/k}{v}\Big)^p\Big)_{\!v}, $$
giving $\omega'_n = \omega_n^p$ on $X$.
Then, $\omega'_n(X) \subseteq \omega'_n(\Lambda_1)$ yields:
$$\order {\mathcal X}_n^{G_n^p}  = \order {\mathcal X}_1\! \cdot\! 
\ffrac{p^{(n-1)\, (s-1)}}{\order \omega'_n(\Lambda_1)} \leq \order {\mathcal X}_1 
\! \cdot\!  \ffrac{p^{(n-1)\, (s-1)}}{\order \omega'_n(X)} =
\order {\mathcal X}_1\! \cdot\!  \ffrac{p^{(n-1)\, (s-1)}}{\order \omega^p_n(X)}; $$
Lemma \ref{free}, for $\omega_n^p$, gives 
$\order \omega^p_n(X) = p^{(n-1) \, \min\,(\rho, s-1)}$, then,
with $\order {\mathcal X}_1 = \order {\mathcal X} \cdot p^{\lambda}$:
$$\order {\mathcal X}_n^{G_n^p} \leq \order {\mathcal X}_1 
\! \cdot\!  \ffrac{p^{(n-1)\, (s-1)}}{p^{(n-1) \, \min\,(\rho, s-1)}}  
= \order {\mathcal X}_1 \! \cdot\!  p^{\lambda \cdot (n-1)} =  
\order {\mathcal X} \! \cdot\!  p^{\lambda \cdot n}.$$

Since ${\mathcal X}_n^{G_n^p} \supseteq 
{\mathcal X}_n^{G_n}$ and $\order {\mathcal X}_n^{G_n} = 
\order {\mathcal X} \!\cdot p^{\lambda \cdot n}$ from \eqref{Gn}, we get
${\mathcal X}_n^{G_n} = {\mathcal X}_n^{G_n^p}$, equivalent to
${\mathcal X}_n^{1-\sigma_n} = {\mathcal X}_n^{1-\sigma_n^p} = 
{\mathcal X}_n^{(1-\sigma_n) \,\cdot\, \theta}$,
where $\theta = 1+\sigma_n+ \cdots + \sigma_n^{p-1} \in (p, 1-\sigma_n)$,
a maximal ideal of $\Z_p[G_n]$ since $\Z_p[G_n]/(p, 1-\sigma_n) \simeq \F_p$; 
so ${\mathcal X}_n^{1-\sigma_n} = 1$, thus ${\mathcal X}_n={\mathcal X}_n^{G_n}$. 
Whence (i).

\smallskip
From $\Norm_n({\mathcal X}_n) = {\mathcal X}$, 
${\mathcal X}_n={\mathcal X}_n^{G_n}$ (from (i))
and $\J_n \circ \Norm_n = \nu_n$ (the algebraic norm), one obtains $\J_n({\mathcal X}) = 
\J_n(\Norm_n({\mathcal X}_n)) = {\mathcal X}_n^{\nu_n} = {\mathcal X}_n^{p^n}$. Let
$x \in \Ker(\J_n)$ and put $x = \Norm_n(y)$, $y \in  {\mathcal X}_n$; then 
$1 = \J_n(x) = \J_n(\Norm_n(y))=y^{p^n}$, so $ \Ker(\J_n) \subseteq 
\Norm_n({\mathcal X}_n[p^n])$ and if $x=\Norm_n(y)$, $y^{p^n}=1$,
then $\J_n(x)=\J_n(\Norm_n(y)) = y^{p^n}=1$. Whence (ii).
\end{proof}

\begin{corollary} \label{capitulation}
Let $p^e$ be the exponent of ${\mathcal X}$ in $k$ fixed, and let $K$ be 
a totally $S$-ramified cyclic $p$-tower of degree $p^N$, with $N \geq e$,
such that $\lambda=0$ (namely $1 \leq \order S \leq r_1+r_2$ for 
the family $\{{\mathcal C}_n\}$ of $p$-class groups). 
If $\order {\mathcal X}_1 = \order {\mathcal X}$, then ${\mathcal X}$ 
capitulates in $k_e$. In particular, this applies in a $\Z_p$-extension $\wt k/k$ 
with $\lambda=0$, $\order {\mathcal X}_1=\order {\mathcal X}$ and 
$S$ (set of $p$-places) totally ramified.
\end{corollary}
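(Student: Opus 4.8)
The plan is to deduce the corollary directly from Theorem \ref{mt}(ii), the only genuinely new input being the observation that when $\lambda = 0$ the layer group ${\mathcal X}_e$ is abstractly isomorphic to ${\mathcal X}$, hence has the same exponent $p^e$.

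First I would check that we are in the situation of Theorem \ref{mt}. The standing assumption $\tor_\Z(E) \subset \Norm_{K/k}(K^\times)$ is in force (automatic, if necessary, after the bounded reduction of Remark \ref{torsion}), and the hypothesis $\order {\mathcal X}_1 = \order {\mathcal X}$ is precisely $\order {\mathcal X}_1 = \order {\mathcal X} \cdot p^{\lambda}$ since $\lambda = 0$. Hence Theorem \ref{mt}(ii) applies for every $n \in [0, N]$: it gives the transfer formula $\J_{k_n/k}({\mathcal X}) = {\mathcal X}_n^{p^n}$ and, since $\lambda = 0$, it also gives that the arithmetic norm $\Norm_{k_n/k} : {\mathcal X}_n \to {\mathcal X}$ is an isomorphism of finite abelian groups.

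The key step is then the exponent computation at level $n = e$, which is legitimate because $N \geq e$, so that $k_e$ is a layer of the tower. Since $\Norm_{k_e/k} : {\mathcal X}_e \to {\mathcal X}$ is a group isomorphism and ${\mathcal X}$ has exponent $p^e$ by hypothesis, ${\mathcal X}_e$ has exponent $p^e$ as well, i.e. ${\mathcal X}_e^{p^e} = 1$. Feeding this into the transfer formula of Theorem \ref{mt}(ii) taken at $n = e$ yields $\J_{k_e/k}({\mathcal X}) = {\mathcal X}_e^{p^e} = 1$; that is, every class of ${\mathcal X}$ becomes trivial in $k_e$, which is the asserted capitulation.

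Finally, for a $\Z_p$-extension $\wt k/k$ one applies the finite case to the sub-tower $k_e/k$: the set $S$ of $p$-places ramified in $\wt k/k$ is totally ramified in every finite layer, so $k_e/k$ is a totally $S$-ramified cyclic $p$-tower of degree $p^e$ with $N = e \geq e$, and the hypotheses $\lambda = 0$ and $\order {\mathcal X}_1 = \order {\mathcal X}$ carry over verbatim. I do not anticipate any real obstacle; the single point deserving explicit mention is that the mere inequality $\order {\mathcal X}_e \geq \order {\mathcal X}$ would not pin down the exponent of ${\mathcal X}_e$ — it is the full strength of the case $\lambda = 0$ of Theorem \ref{mt}(ii), namely that $\Norm_{k_e/k}$ is an isomorphism, that makes the exponents coincide and forces ${\mathcal X}_e^{p^e} = 1$.
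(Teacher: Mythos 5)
Your proof is correct and follows essentially the same route as the paper: both invoke Theorem \ref{mt}(ii) at level $n=e$ and use the fact that, for $\lambda=0$, the norm $\Norm_{k_e/k}:{\mathcal X}_e\to{\mathcal X}$ is an isomorphism so that ${\mathcal X}_e$ inherits exponent $p^e$. The only cosmetic difference is that you conclude via the image formula $\J_{k_e/k}({\mathcal X})={\mathcal X}_e^{p^e}=1$, whereas the paper uses the kernel formula $\Ker(\J_{k_e/k})=\Norm_{k_e/k}({\mathcal X}_e[p^e])={\mathcal X}$; these are the two halves of the same statement.
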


\begin{proof} Since $\lambda=0$, ${\mathcal X}_e \simeq {\mathcal X}$
from the isomorphism induced by $\Norm_e : {\mathcal X}_e \to \!\!\!\!\! \to {\mathcal X}$,
whence $\Ker(\J_e) = \Norm_e({\mathcal X}_e[p^e]) 
\simeq {\mathcal X}[p^e]={\mathcal X}$.
\end{proof}

\begin{example}
Put $p^\epsilon := \order \tor_{\Z_p}^{}(E)$.
For a prime $\ell \equiv 1\! \pmod{p^{N+\epsilon}}$, let $S_\ell$ be the set of prime ideals 
${\mathfrak l}$ of $k$ dividing $\ell$ and let $K \subseteq k(\mu_\ell)$ be the subfield
of degree $p^N$.
Sections \ref{nonmono} and \ref{examples} will give many examples of such 
capitulations of $p$-class groups of real fields $k$ in $K \subseteq k (\mu_\ell)$ 
(totally $S$-ramified with $S = S_\ell$); for instance, for $k=\Q(\sqrt {4409})$, 
the capitulation of the $3$-class group ${\mathcal C} \simeq \Z/9\Z$ occurs in 
$k_2$ for $\ell \in$ $\{19$, $37$, $73$, $109$, $127$, $271$, 
$307$, $379$, $397$, $523$, $541$, $577$, $739$, $883$, $\,\ldots \}$.
The $3$-class group ${\mathcal C} \simeq \Z/9\Z \times \Z/3\Z$ of the cyclic
cubic field of conductor $5383$, defined by $P=x^3+x^2-1794 x+17744$,
capitulates in $k_2$ for $\ell = 109, 163, 919,\,\ldots$
The $2$-class group ${\mathcal C} \simeq \Z/16\Z$ of $\Q(\sqrt{2305})$ 
capitulates in in $k_4$ for $\ell = 97, 193, 353, 449, 929,\,\ldots$ 
The $5$-class group ${\mathcal C} \simeq \Z/25\Z$ of $\Q(\sqrt{24859})$ 
capitulates in in $k_2$ for $\ell = 101, 151, 251, 401,\,\ldots$

\smallskip
Totally real fields $k$ give $\lambda = 0$ whatever $S = S_\ell$, but some 
non-totally real fields may give $\lambda = 0$; for instance, let 
$P=x^6 + x^5 - 5 x^4 - 4 x^3 + 6 x^2 + 2 x + 7$
defining $k$, of signature $(0,3)$, of Galois group ${\rm S}_6$
and discriminant $-11\cdot 31 \cdot 971 \cdot 2801$;
so $\lambda = \max (0,\order S -1 - (r_1+r_2-1)) = 
\max(0, \order S - 3)$. For $p=3$, 
${\mathcal C} \simeq \Z/3\Z$ capitulates in $k_1$ for $\ell = 37,
61, 67, 73, 97, 103, 109, \,\ldots$ for which
$\order S_\ell \in \{1,2,3\}$, giving $\lambda = 0$ 
as expected. We have verified the capitulation of ${\mathcal C}$ in $k_1$ 
for $\ell = 37$ (for this and other complements see \cite{Gra+}).  
\end{example}

\section{On the existence of capitulation fields for real class groups}\label{nonmono}
A totally real field $k$ been given, a problem is {\it the existence} of cyclic $p$-towers 
$K/k$ such that ${\mathcal C}$, of exponent $p^e$, capitulates in $K$.
We do not intend to establish again the ``abelian capitulation'' proved in the literature 
(Gras \cite{Gcap} (1997), Kurihara \cite{Kcap} (1999), Bosca \cite{Bcap} (2009), 
Jaulent \cite{Jau2,Jcap} (2019, 2020)), but we will examine this possibility in a 
simpler way using Theorem \ref{mt}. We may assume that $k$ is Galois real and 
we shall make some heuristics using only cyclic $p$-towers totally $S$-ramified 
with $S=S_\ell$, the set of places of $k$ above $\ell$; this is an important difference 
compared to the previous references.

\subsection{Monogenic class groups and $K \subset k(\mu_\ell)$}
We assume, at first, that ${\mathcal C}$ is monogenic, that is to say that
there exists a prime ideal ${\mathfrak q}$ such that ${\mathcal C}$ is
generated by the classes of the conjugates of ${\mathfrak q}$.
Let $K \subseteq k(\mu_\ell)$, $\ell \equiv 1 \pmod {2p^N}$, $N \gg 0$, be a 
cyclic $p$-tower of degree $p^N$ and let $S=S_\ell$. We assume $\ell$
totally split in $k$, whence $\order S = [k : \Q].$
We have, for $k_e/k$ of Galois group $G_e$, the exact sequence:
$1 \to \cl_e({\mathcal J}_e^{G_e}) \too {\mathcal C}_e^{G_e} 
\too E \cap \Norm_e(k_e^\times)/ \Norm_e(E_e) \to 1$,
where the group ${\mathcal J}_e^{G_e}$, of invariant ideals 
of $k_e$, is of the form $\langle S_e \rangle \cdot \J_e ({\mathcal J})$, where
$S_e$ is the set of prime ideals of $k_e$ above $S$ and $\J_e ({\mathcal J})$ the 
extension to $k_e$ of the group of ideals of $k$. 
From Theorem \ref{mt}, necessary conditions, to get the criterion of stability
$\order {\mathcal C}_1 = \order{\mathcal C}$, are ${\mathcal C}_e = 
{\mathcal C}_e^{G_e}$ and $(E : E \cap \Norm_e(k_e^\times)) = 
p^{e (\oorder S - 1)} = p^{e ([k : \Q] - 1)}$, whence 
$E \cap \Norm_e(k_e^\times) = E^{p^e} \subseteq \Norm_e(E_e)$ giving 
${\mathcal C}_e^{G_e} = \cl_e(\langle S_e \rangle) \cdot  \J_e ({\mathcal C})$. 
But the condition ${\mathcal C}_e ={\mathcal C}_e^{G_e}$
is equivalent to $({\mathcal C}_e/{\mathcal C}_e^{G_e})^{G_e}=1$,
whence (from \eqref{totram}), to
$\ds \frac{\order {\mathcal C}}{\order \Norm_e({\mathcal C}_e^{G_e})} \times 
\frac{p^{e ([k\, :\, \Q] - 1)}}{(\Lambda' : \Lambda' \cap  \Norm_e (k_e^\times))} = 1$, 
for some $\Lambda' \supseteq E$ so that the second factor is trivial.
Since $\Norm_e(\langle S_e \rangle) = \langle S \rangle$, 
$\ds \ds \frac{\order {\mathcal C}}{\order \Norm_e({\mathcal C}_e^{G_e})}
= \frac{\order {\mathcal C}}{\order \cl(\langle S \rangle) \cdot \order {\mathcal C}^{p^e}}
= \ds \frac{\order {\mathcal C}}{\order \cl(\langle S \rangle)} = 1$ if and only if
$S$ generates ${\mathcal C}$. 
Under the assumption of monogenicity, from Chebotarev density theorem, 
there exist infinitely many such sets and an obvious heuristic is that among 
these sets, infinitely many ones give the criterion $\order {\mathcal C}_1 
= \order {\mathcal C}$, hence capitulation of ${\mathcal C}$.
But we can go further for non-monogenic ${\mathcal C}$'s.

\subsection{Non-monogenic class groups and $K \subset k(\mu_\ell)$}
Using only single primes $\ell$, the condition $\order {\mathcal C}_1 = 
\order{\mathcal C}$ does not hold in general in the first layer $k_1$. 
Nevertheless, many examples lead 
to a stabilization from some $n_0>1$, hence capitulation of ${\mathcal C}$ 
from $k'_e = k_{n_0+e}$, so that we must have $N > n_0+e$, where $e$ is 
a constant, which suggests the existence of such towers.
Unfortunately, any verification becomes out of reach if $n_0$ is too large. 
For instance, using the program of \S\,\ref{quad} or the program below,
for $k=\Q(\sqrt{130})$, the $p$-class group ${\mathcal C} \simeq (\Z/2\Z)^2$ 
capitulates in $k_2$ for $\ell = 1697$, $2017$, $5153$, $5857,\,\ldots$.
For $k=\Q(\sqrt {2310})$, where 
${\mathcal C} \simeq (\Z/2\Z)^3$, we have the following results:\par
\ft\begin{verbatim}
ell=593                     ell=1217                    ell=4289
v0=3 C0=[2,2,2]            v0=3 C0=[2,2,2]            v0=3  C0=[2,2,2]
v1=6 C1=[2,4,8]            v1=6 C1=[2,4,8]            v1=7  C1=[2,2,2,16]
v2=7 C2=[2,4,16]           v2=7 C2=[2,4,16]           v2=9  C2=[2,2,4,32]
v3=7 C3=[2,4,16]           v3=7 C3=[2,4,16]           v3=10 C3=[2,2,4,64]
\end{verbatim}\ns

For $\ell = 593$ (resp. $\ell = 1217$) the stability holds from $n_0=2$ 
with $N=3$ (resp. $N=5$); thus ${\mathcal C}_2[2]$ capitulates in 
$k'_1=k_3$, whence ${\mathcal C}[2] = {\mathcal C}$ capitulates in $k_3$.
For $\ell=4289$, a stabilization seems possible from $n_0=3$, but
${\mathcal C}_4$ is not computable.

\smallskip
Concerning capitulation of ${\mathcal C}$ in towers $K \subset k(\mu_\ell)$, 
we have the following experiments, showing that it can happen even 
if no stabilization is obtained; the notation ${\sf Cn=[A,\ldots, Z]}$ means 
${\mathcal C}_n \simeq \Z/A\Z \times \ldots \times \Z/Z \Z$ in $k_n$ and 
a box ${\sf [a,\ldots ,z]}$ means that the generators of ${\mathcal C}$
extended in $k_n$ are, respectively, the $a$th, $\ldots$, $z$th powers of
suitable generators of ${\mathcal C}_n$ computed by PARI/GP 
($a,\ldots ,z$, being integers modulo $A,\ldots, Z$, respectively); so 
that ${\sf [0, \ldots, 0]}$ is equivalent to the capitulation of ${\mathcal C}$.
For the program, one must precise ${\sf p}$ the minimal $p$-rank of 
${\mathcal C}$ required ${\sf rpmin}$, the length ${\sf N}$ of the tower 
and the interval for ${\sf m}$:\par
\ft\begin{verbatim}
{p=2;rpmin=3;N=3;bm=2;Bm=10^6;for(m=bm,Bm,if(core(m)!=m,next);
P=x^2-m;k=bnfinit(P,1);Ck=k.clgp;r=matsize(Ck[2])[2];\\Ck=class group of k
L=List;for(i=1,r,listput(L,0,i));rp=0;for(i=1,r,ei=Ck[2][i];v=valuation(ei,p);
if(v>0,rp=rp+1));if(rp<rpmin,next);\\computation and test of the p-rank rp
h=k.no;u=h/p^valuation(h,p);Ckp=List;for(i=1,r,ai=idealpow(k,Ck[3][i],u);
listput(Ckp,ai,i));\\representatives ai of the p-class group Ckp
forprime(ell=5,200,if(Mod(ell-1,2*p^N)!=0||Mod(m,ell)==0,next);Lq=List;
\\the program computes prime representatives qi, inert in K/k, split in k:
for(i=1,r,ai=Ckp[i];forprime(q=2,10^4,if(q==ell || kronecker(m,q)!=1,next);
o=znorder(Mod(q,ell));if(valuation(o,p)!=valuation(ell-1,p),next);\\inertia
f=idealfactor(k,q);qi=component(f,1)[1];cij=ai;for(j=1,p-1,cij=idealmul(k,cij,qi);
if(List(bnfisprincipal(k,cij)[1])==L,listput(Lq,q,i);break(2)))));
print("m=",m," Lq=",Lq);\\Lq = list of primes qi generating the p-class group
for(n=0,N,R=polcompositum(P,polsubcyclo(ell,p^n))[1];K=bnfinit(R,1);H=K.no;
U=H/p^valuation(H,p);print();print("ell=",ell," n=",n," CK",n,"=",K.cyc);
for(i=1,r,Fi=idealfactor(K,Lq[i]);Qi=idealpow(K,component(Fi,1)[1],U);
print(bnfisprincipal(K,Qi)[1])))))}
\end{verbatim}\ns
$p=2$
\ft\begin{verbatim}
m=1155  ell=193
C0=[2,2,2]      C1=[2,2,2,2,2]      C2=[4,4,2,2,2,2,2]      C3=[8,8,4,2,2,2,2]
[1,0,0]         [1,0,1,0,0]         [0,0,0,0,0,0,0]         [0,0,0,0,0,0,0] 
[0,1,0]         [1,1,1,0,0]         [2,2,1,1,1,0,0]         [0,0,0,0,0,0,0]            
[0,0,1]         [0,1,0,0,0]         [2,2,1,1,1,0,0]         [0,0,0,0,0,0,0]
\end{verbatim}\ns
\ft\begin{verbatim}
m=1155  ell=337
C0=[2,2,2]      C1=[2,2,2,2,2]      C2=[20,4,4,2,2]         C3=[40,4,4,2,2]
[1,0,0]         [0,1,1,1,1]         [0,0,0,0,0]             [0,0,0,0,0]
[0,1,0]         [1,0,1,1,0]         [10,2,0,0,0]            [0,0,0,0,0]
[0,0,1]         [1,1,0,0,1]         [10,2,0,0,0]            [0,0,0,0,0]
\end{verbatim}\ns
\ft\begin{verbatim}
m=1995  ell=17 
C0=[2,2,2]      C1=[4,2,2,2]        C2=[8,4,2,2]            C3=[8,4,2,2]
[1,0,0]         [2,1,0,0]           [0,0,0,0]               [0,0,0,0]
[0,1,0]         [2,1,0,0]           [0,0,0,0]               [0,0,0,0]
[0,0,1]         [0,1,0,0]           [4,0,0,0]               [0,0,0,0]
\end{verbatim}\ns
\ft\begin{verbatim}
m=1995  ell=113
C0=[2,2,2]      C1=[12,2,2,2]       C2=[12,2,2,2,2,2]       C3=[24,4,2,2,2,2]
[1,0,0]         [6,1,1,1]           [0,0,1,1,1,0]           [0,0,0,0,0,0]
[0,1,0]         [6,0,0,0]           [0,0,0,0,0,0]           [0,0,0,0,0,0]
[0,0,1]         [0,0,0,0]           [0,0,0,0,0,0]           [0,0,0,0,0,0] 
\end{verbatim}\ns
\ft\begin{verbatim}
m=1995  ell=193 
C0=[2,2,2]      C1=[2,2,2,2,2]      C2=[12,6,2,2,2,2,2]     C3=[24,12,4,4,2,2,2]
[1,0,0]         [1,0,1,1,0]         [6,3,1,0,0,0,0]         [0,0,0,0,0,0,0]
[0,1,0]         [0,1,1,1,0]         [6,3,1,0,0,0,0]         [0,0,0,0,0,0,0]
[0,0,1]         [0,0,0,0,0]         [0,0,0,0,0,0,0]         [0,0,0,0,0,0,0]
\end{verbatim}\ns
\ft\begin{verbatim}
m=2210  ell=97 
C0=[2,2,2]      C1=[12,4,2]         C2=[24,4,2]             C3=[24,4,2]
[1,0,0]         [0,0,0]             [0,0,0]                 [0,0,0]
[0,1,0]         [6,0,0]             [12,0,0]                [0,0,0]
[0,0,1]         [6,2,0]             [12,0,0]                [0,0,0]
\end{verbatim}\ns
\ft\begin{verbatim}
m=2210  ell=113 
C0=[2,2,2]      C1=[12,2,2,2]       C2=[24,4,2,2,2]         C3=[48,12,12,4,2]
[1,0,0]         [0,1,1,0]           [0,2,0,0,0]             [0,0,0,0,0]
[0,1,0]         [6,0,0,0]           [12,0,0,0,0]            [24,0,0,0,0]
[0,0,1]         [6,1,1,0]           [12,2,0,0,0]            [24,0,0,0,0]
\end{verbatim}\ns
$p=3$
\ft\begin{verbatim}
m=23659  ell=19             m=23659 ell=37                  m=32009 ell=19
C0=[6,3]C1=[18,3]C2=[18,3]  C0=[6,3]C1=[18,3,3]C2=[18,3,3]  C0=[3,3]C1=[9,3]C2=[9,3]
[0,1]   [0,0]    [0,0]      [0,2]   [12 0,0]   [0,0,0]      [0,1]   [6,0]   [0,0]
[2,0]   [6,0]    [0,0]      [2, 0]  [6,0,0]    [0,0,0]      [1,2]   [6,0]   [0,0]
\end{verbatim}\ns
\ft\begin{verbatim}
m=32009 ell=37                      m=42817 ell=19
C0=[3,3]  C1=[9,3]   C2=[9,3]       C0=[3,3]  C1=[9,3]  C2=[27,3]
[0,1]     [6,0]      [0,0]          [2,1]     [6,0]     [9,0]
[1,2]     [3,0]      [0,0]          [1,0]     [3,0]     [18,0]
\end{verbatim}\ns

In the non-monogenic case, we observe many capitulations (or partial ones), in cyclic 
$p$-tower contained in $k(\mu_\ell)$ (possibly, the capitulation holds in larger layers 
but this would require several days of computer); so we propose the following conjecture:

\begin{conjecture}\label{conjcap}
Let $k$ be a totally real number field with generalized $p$-class group 
${\mathcal X}_{k,{\mathfrak m}} = {\mathcal C}_{k,{\mathfrak m}}/{\mathcal H}_k$ 
(cf. \eqref{DEF}). There are infinitely many primes $\ell$, $\ell \equiv 1 \pmod {2\, p^{N}}$, 
$N \gg 0$, such that ${\mathcal X}_{k,{\mathfrak m}}$ capitulates in $k(\mu_\ell)$.
\end{conjecture}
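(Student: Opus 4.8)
\smallskip
\noindent\emph{A possible approach to Conjecture \ref{conjcap}.}
The plan is to combine Theorem \ref{mt} with Corollary \ref{capitulation} so as to reduce the whole statement to a single first-layer condition. Write $d := [k:\Q]$ and let $p^e$ be the exponent of ${\mathcal X}_{k,{\mathfrak m}}$; since $k$ is totally real one has $\rho \geq d-1 \geq \order S_\ell - 1$, hence $\lambda = \max(0,\, \order S_\ell-1-\rho) = 0$ for every family $\{{\mathcal X}_n\}$. Fix $N \geq e$ and a prime $\ell \equiv 1 \pmod{2p^N}$ that is totally split in $k$ (so $\order S_\ell = d$), and let $K \subseteq k(\mu_\ell)$ be the degree $p^N$ subfield (totally $S_\ell$-ramified). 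The hypothesis $\tor_\Z(E) \subset \Norm_{K/k}(K^\times)$ of Theorem \ref{mt} then holds: for odd $p$ because $\tor_\Z(E) = \{\pm1\}$ and $(-1)^{p^N} = -1$, and for $p=2$ from Lemma \ref{albert}, since the congruence $\ell \equiv 1 \pmod{2^{N+1}}$ puts $K$ inside a longer cyclic tower in $k(\mu_\ell)$. By Corollary \ref{capitulation} it therefore suffices to show that for every $N$ some such $\ell$ satisfies $\order {\mathcal X}_1 = \order {\mathcal X}$; the conjecture is precisely this assertion. So the first step is to forget the tower and analyse the single equality $\order {\mathcal X}_1 = \order {\mathcal X}$ in $k_1 = k_1(\ell)$.

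The second step dissects this equality. Chevalley's formula \eqref{CG} with $n=1$ gives $\order {\mathcal X}_1^{G_1} = \order {\mathcal X}\cdot p^{d-1}/\order\omega_1(\Lambda)$, $\Omega_1 \simeq (\Z/p\Z)^{d-1}$, and combined with the elementary identity $\order {\mathcal X}_1 = \order {\mathcal X}_1^{G_1}\cdot \order {\mathcal X}_1^{1-\sigma_1}$ (Herbrand quotient $1$ of the $\Z/p\Z$-module ${\mathcal X}_1$) it shows that $\order {\mathcal X}_1 = \order {\mathcal X}$ holds \emph{if and only if} (a) the Hasse-symbol map $\omega_1 : \Lambda \to \Omega_1$ is surjective, and (b) ${\mathcal X}_1^{1-\sigma_1} = 1$. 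Since $\tor_\Z(E) \subset \Norm_{K/k}(K^\times)$ we have $\omega_1(\Lambda) = \omega_1(X)$, so condition (a) only involves $X = \langle \varepsilon_1,\dots,\varepsilon_r;\, \alpha_1,\dots,\alpha_t\rangle$; and for $\ell$ split in $k$ the symbol $\big(\ffrac{x,\, k_1/k}{v}\big)$, $v\mid\ell$, merely records whether $x$ is a $p$th-power residue modulo $v$. Hence (a) becomes a condition on the splitting of the $d$ primes of $k$ above $\ell$ in the \emph{fixed} Kummer extension $k\big(\zeta_p,\, x_1^{1/p},\dots,x_\rho^{1/p}\big)/k$, namely that these symbols map $X$ onto the full trace-zero subgroup $(\Z/p\Z)^{d-1}$ of $(\Z/p\Z)^d$; since $\rho \geq d-1$ this is the generic behaviour, so by Chebotarev it holds for a set of $\ell$ of positive density (modulo the routine check that this Kummer extension has the expected degree $p^\rho$ over $k(\zeta_p)$). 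One also records that $S_\ell$ must generate ${\mathcal X}$ for the criterion to hold (cf. the necessary conditions in \S\ref{nonmono}): this is a positive-density Chebotarev condition in the monogenic case, but---as the examples of \S\ref{nonmono} show---it can be impossible for a single $\ell$ when ${\mathcal X}$ is not monogenic as a $\Z_p[\Gal(k/\Q)]$-module, which is one reason for fallback (i) below. Assuming $S_\ell$ chosen so that it generates ${\mathcal X}$, the entire remaining content of the conjecture is condition (b).

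The hard part, and the only genuinely non-trivial step, is (b): showing that for infinitely many of the $\ell$ retained in step 2 the non-genus part ${\mathcal X}_1^{1-\sigma_1}$ of the $p$-ray class group of the \emph{varying} field $k_1(\ell)$ vanishes. This is not a Chebotarev condition---it cannot be read off from the splitting of $\ell$ in any fixed number field---and it is exactly the quantity governed by Cohen--Lenstra and Gerth type heuristics for the ``extra'' part of class groups; one expects it to vanish for a positive proportion of the admissible $\ell$, but turning this expectation into a theorem would require equidistribution results for $p$-class groups along the family $\{k_1(\ell)\}_\ell$, uniform enough to coexist with the density constraints of step 2, and such results are available only in very restricted cases and are out of reach in general. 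Two fallbacks I would also try, each running into the same difficulty, are: (i) replace ``$\order {\mathcal X}_1 = \order {\mathcal X}$'' by ``$\order {\mathcal X}_{n_0+1} = \order {\mathcal X}_{n_0}$ for some $n_0 \geq 1$'', which by Remark \ref{lambda=0}({\bf a}) still yields capitulation of ${\mathcal X}$ in $k_{n_0+e}$ (as in the numerical experiments of \S\ref{nonmono}), at the price of an uncontrolled $n_0$ and the analogous distributional question for the ${\mathcal X}_{n_0}(\ell)$; and (ii) start from the unconditional abelian capitulation theorems of Gras, Kurihara, Bosca and Jaulent recalled in \S\ref{nonmono}---which produce a capitulation field inside $\Q^{\ab}$ built from \emph{several} ramified primes---and try a ``concentration of ramification'' argument replacing it by a single $k(\mu_\ell)$, which I do not see how to carry out. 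In short: the reduction to (a)$\,+\,$(b), and the verification of (a), are routine (Chevalley together with Chebotarev), whereas (b)---the vanishing of the non-ambiguous part of ${\mathcal X}_1$ for infinitely many $\ell$---is the irreducible arithmetic-statistical heart of the conjecture.
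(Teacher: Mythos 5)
This statement is a conjecture in the paper, so there is no proof of it to compare against: the paper supports it only by the heuristic reduction of \S\ref{nonmono} (Chevalley's formula forcing $S_\ell$ to generate ${\mathcal X}$, Chebotarev density in the monogenic case, and numerical evidence otherwise). Your analysis follows essentially that same route --- reduction via Theorem \ref{mt} and Corollary \ref{capitulation} to the first-layer equality with $\lambda=0$, the Chebotarev-accessible condition on $\omega_1(X)$ and on $S_\ell$ generating ${\mathcal X}$ --- and you correctly isolate the vanishing of ${\mathcal X}_1^{1-\sigma_1}$ for infinitely many varying $\ell$ as the unproved arithmetic-statistical core, which is precisely why the statement remains a conjecture rather than a theorem.
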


Examples of non-totally real fields (case of the degree $6$ example and non-Galois 
cubic fields, as illustrated in \cite{Gra+}) show that the property can be enlarged to 
some non-totally real fields, with some conditions on the infinite places since it is 
obvious that capitulation in $k(\mu_\ell)$ does not hold for an imaginary quadratic field.
Since the structures of these $p$-class groups depend on norm properties, the
conjecture can probably be also completed by densities results 
thanks to the techniques used in the classical approaches \cite{Bcap,Gcap,Jcap,Kcap} 
and especially those of \cite{KP,Sm}.

\section{Examples and counterexamples, with modulus, of $\lambda$-stabilities}\label{examples}
\subsection{Cyclic $p$-towers over $k = \Q(\sqrt{\pm m})$}\label{quad}

Consider, for $\ell \equiv 1 \pmod{p^N}$ (in ${\sf ell}$), the 
cyclic $p$-tower contained in $k (\mu_\ell)/k$, for $k=\Q(\sqrt{s\,m})$, 
$m >0$ square free (in ${\sf m}$), $s = \pm 1$. The following PARI/GP 
program computes ${\mathcal C}_{k_n,{\mathfrak m}} =: {\mathcal C}_n$ 
for any prime-to-$\ell$ modulus ${\mathfrak m}$ (in ${\sf mod}$). 
The polynomial $P_n$ (in ${\sf Pn}$) defines the layer $k_n$.
If $s=-1$ (resp. $s=1$), $\lambda = \order S -1 - \rho \in\{0,1\}$ (resp. $\lambda = 0$)
in $k$ but decreases in the tower (Remark \ref{lambda=0}). 
Then ${\sf vn}$ denotes the $p$-valuation of the order of ${\mathcal C}_n$ whose 
structure ${\sf Cn}$ is given by the list ${\sf C}$.

\smallskip
The computations up to ${\sf N} = 3$ or $4$ (${\sf n \in [0,N]}$) are only for verification when 
a $\lambda$-stability exists; otherwise some examples do not $\lambda$-stabilize 
in the interval considered and no conclusion is possible.
For the program, one must precise ${\sf p}$, ${\sf ell}$, the modulus ${\sf mod}$,
${\sf s=\pm1}$ defining real or imaginary quadratic fields $k$,
the length ${\sf N}$ of the tower and the interval for ${\sf m}$:\par
\ft\begin{verbatim}
{p=2;ell=257;mod=5;s=-1;N=3;bm=2;Bm=10^3;
for(m=bm,Bm,if(core(m)!=m || Mod(m,ell)==0,next);
P=x^2-s*m;lambda=0;if(s==-1&kronecker(s*m,ell)==1,lambda=1);\\lambda 
print();print("p=",p," mod=",mod," ell=",ell," sm=",s*m," lambda=",lambda);
for(n=0,N,Pn=polcompositum(polsubcyclo(ell,p^n),P)[1];kn=bnfinit(Pn,1);\\layer kn
knmod=bnrinit(kn,mod);v=valuation(knmod.no,p);Cn=knmod.cyc;\\ray class group of kn
C=List;d=matsize(Cn)[2];for(j=1,d,c=Cn[d-j+1];w=valuation(c,p);
if(w>0,listinsert(C,p^w,1)));\\end of computation of the p-ray class group of kn
print("v",n,"=",v," p-ray class group=",C)))}
\end{verbatim}\ns
\ft\begin{verbatim}
IMAGINARY QUADRATIC FIELDS, p=2, ell=257, mod=1:
m=-2,lambda=1   m=-11,lambda=1  m=-14,lambda=0   m=-17,lambda=1        m=-15,lambda=1  
v0=0  []        v0=0  []        v0=2  [4]        v0=2 [4]              v0=1 [2]    
v1=3  [8]       v1=2  [4]       v1=4  [4,4]      v1=5 [8,2,2]          v1=2 [4] 
v2=7  [16,4,2]  v2=7  [8,8,2]   v2=8  [4,4,4,4]  v2=10[16,2,2,2,2,2,2] v2=3 [8]  
v3=13 [32,8,2,  v3=10 [16,16,4] v3=16 [8,8,8,4,  v3=19[32,2,2,2,2,2,   v3=4 [16]
       2,2,2,2]                         4,2,2,2]    2,2,2,2,2,2,2,2,2]
\end{verbatim}\ns
\ft\begin{verbatim}
m=-782,lambda=1                               m=-858,lambda=0
v0=3 [4,2]                                    v0=4  [4,2,2]
v1=8 [4,4,4,2,2]                              v1=12 [128,2,2,2,2,2]
v2=22[32,32,4,4,4,2,2,2,2,2,2]                v2=22 [256,16,8,2,2,2,2,2,2,2]
v3=35[64,64,8,8,8,2,2,2,2,2,2,2,2,2,2,2,2,2,2]v3=32 [512,32,16,4,4,4,2,2,2,2,2,2,2,2]
\end{verbatim}\ns
\ft\begin{verbatim}
m=-15,mod=5,lambda=1  m=-35,mod=5,lambda=1 m=-253,mod=5,lambda=1
v0=2 [4]              v0=2 [4]             v0=4  [4,2,2]
v1=3 [4,2]            v1=3 [4,2]           v1=12 [64,4,2,2,2,2]
v2=4 [8,2]            v2=4 [8,2]           v2=22 [128,8,4,4,2,2,2,2,2,2,2,2]
                                           v3=34 [256,16,8,4,4,4,4,4,4,2,2,2,2,2,2,2]
\end{verbatim}\ns
\ft\begin{verbatim}
IMAGINARY QUADRATIC FIELDS, p=3, ell=163, mod=1:
m=-2,lambda=1       m=-293,lambda=1     m=-983,lambda=1       m=-3671,lambda=0
v0=0 []             v0=2 [9]            v0=3 [27]             v0=4  [81]
v1=1 [3]            v1=3 [27]           v1=4 [81]             v1=7  [243,3,3]
v2=2 [9]            v2=4 [81]           v2=5 [243]            v2=11 [729,9,9,3]
\end{verbatim}\ns
\ft\begin{verbatim}
REAL QUADRATIC FIELDS, p=2, lambda=0:
mod=7,ell=12289:
m=2          m=19           m=89        m=15                m=39
v0=0 []      v0=1 [2]       v0=0 []     v0=2 [2,2]          v0=2  [2,2]
v1=1 [2]     v1=2 [2,2]     v1=0 []     v1=5 [2,2,2,2,2]    v1=5  [2,2,2,2,2]
v2=1 [2]     v2=2 [2,2]     v2=0 []     v2=9 [8,4,4,2,2]    v2=10 [8,4,2,2,2,2,2]
\end{verbatim}\ns
\ft\begin{verbatim}
mod=17,ell=7340033
m=17      m=19          m=21        m=38          m=26             m=226
v0=2 [4]  v0=6 [16,4]   v0=4 [16]   v0=5 [16,2]   v0=6 [16,2,2]    v0=5  [8,8]
v1=2 [4]  v1=6 [16,4]   v1=4 [16]   v1=5 [16,2]   v1=8 [16,4,2,2]  v1=10 [32,8,8]
\end{verbatim}\ns
\ft\begin{verbatim}
REAL QUADRATIC FIELDS, p=3, lambda=0:
mod=1,ell=109           mod=1,ell=109        mod=7,ell=163          mod=7,ell=163
m=326                   m=4409               m=5                    m=6 
v0=1 [3]                v0=2 [9]             v0=1 [3]               v0=1 [3]
v1=1 [3]                v1=2 [9]             v1=1 [3]               v1=2 [3,3]
v2=1 [3]                v2=2 [9]             v2=1 [3]               v2=2 [3,3]
\end{verbatim}\ns
\ft\begin{verbatim}
mod=7,ell=163           mod=7,ell=163        mod=19,ell=163         mod=19,ell=163
m=22                    m=15                 m=2                    m=11 
v0=2 [3,3]              v0=1 [3]             v0=2 [9]               v0=3 [9,3]
v1=2 [3,3]              v1=3 [3,3,3]         v1=3 [27]              v1=4 [27,3]
v2=2 [3,3]              v2=5 [9,9,3]         v2=3 [27]              v2=4 [27,3]
\end{verbatim}\ns

One finds examples of $\lambda$-stabilities taking for instance $\order S = 6$ 
with three primes split in $k=\Q(\sqrt {\pm m})$ and any of the $1024$ 
totally $S$-ramified cyclic $2$-towers $K/k$ of degree 
$2^6$ contained in $k (\mu_{257 \cdot 449 \cdot 577})$ but, unfortunately,  
only computing in $k$ and $k_1 = k(\sqrt {257 \cdot 449 \cdot 577})$ with
$\lambda = 4$ (resp.~$5$) for $k$ real (resp. imaginary):\par
\ft\begin{verbatim}
m=193          m=1591           m=2669              m=-527           m=-1739 
v0=0 []        v0=1 [2]         v0=2 [4]            v0=1 [2]         v0=2 [4]
v1=4 [2,2,2,2] v1=5 [2,2,2,2,2] v1=6 [4,2,2,2,2]    v1=6 [4,2,2,2,2] v1=7 [8,2,2,2,2]
\end{verbatim}\ns

\subsection{Cyclic $p$-towers over $\Q$ with modulus}
In a cyclic $p$-tower $K/\Q$ for $S=S_\ell$, ${\mathfrak m}=1$, \eqref{totram} 
shows that $\lambda=0$ and $\order {\mathcal C}_n=1$ for all $n$. 
For ${\mathfrak m} \ne 1$, a stability may occur (possibly from $n_0>0$) 
as shown by the following examples ($p=2, 3$) analogous to \cite[Example 4.1]{MY}:\par
\ft\begin{verbatim}
{p=2;ell=257;mod=17;N=5;print("p=",p," ell=",ell," mod=",mod);
for(n=0,N,Pn=polsubcyclo(ell,p^n);kn=bnfinit(Pn,1);\\layer kn
knmod=bnrinit(kn,mod);v=valuation(knmod.no,p);Cn=knmod.cyc;\\ray class group
C=List;d=matsize(Cn)[2];for(j=1,d,c=Cn[d-j+1];w=valuation(c,p);
if(w>0,listinsert(C,p^w,1)));print("v",n,"=",v," C",n,"=",C))}
\end{verbatim}\ns
\ft\begin{verbatim}
p=2,ell=257,mod=17   ell=257,mod=4*17          p=3,ell=163,mod=19   ell=163,mod=9*19
v0=3 [8]             v0=4  [16]                v0=2 [9]             v0=3 [9,3]
v1=4 [16]            v1=5  [16,2]              v1=3 [27]            v1=5 [27,3,3]
v2=4 [16]            v2=7  [16,2,2,2]          v2=3 [27]            v2=6 [81,3,3]
v3=4 [16]            v3=11 [16,2,2,2,2,2,2,2]  v3=3 [27]            v3=6 [81,3,3]
v4=4 [16]            v4=12 [32,2,2,2,2,2,2,2]  
v5=4 [16]            v5=12 [32,2,2,2,2,2,2,2]  
\end{verbatim}\ns

\subsection{Cyclic $p$-towers over a cyclic cubic field $k$}
Let $K \subset k (\mu_\ell)$ defining a cyclic $p$-tower of $k$. The program 
gives the complete list of cyclic cubic fields of conductor ${\sf f \in [bf, Bf]}$.
An interesting fact is that, in most cases, the stability occurs 
(with $\lambda = 0$ in the case $S=S_\ell$); we give an excerpt with the 
defining polynomial ${\sf P}$ of $k$ of conductor ${\sf f}$:\par
\ft\begin{verbatim}
{p=2;ell=257;N=3;bf=7;Bf=10^3;for(f=bf,Bf,if(Mod(f,ell)==0,next);
h=valuation(f,3);if(h!=0 & h!=2,next);F=f/3^h;if(core(F)!=F,next);F=factor(F);
Div=component(F,1);d=matsize(F)[1];for(j=1,d,D=Div[j];if(Mod(D,3)!=1,break));
for(b=1,sqrt(4*f/27),if(h==2 & Mod(b,3)==0,next);A=4*f-27*b^2;
if(issquare(A,&a)==1,\\computation of a and b such that f=(a^2+27b^2)/4
if(h==0,if(Mod(a,3)==1,a=-a);P=x^3+x^2+(1-f)/3*x+(f*(a-3)+1)/27);
if(h==2,if(Mod(a,9)==3,a=-a);P=x^3-f/3*x-f*a/27);print();print("f=",f," P=",P);
\\end of computation of P defining k of conductor f
for(n=0,N,Pn=polcompositum(polsubcyclo(ell,p^n),P)[1];kn=bnfinit(Pn,1);
v=valuation(kn.no,p);Cn=kn.cyc;C=List;d=matsize(Cn)[2];for(j=1,d,c=Cn[d-j+1];
w=valuation(c,p);if(w>0,listinsert(C,p^w,1)));print("v",n,"=",v," C",n,"=",C)))))}
\end{verbatim}\ns
\ft\begin{verbatim}
p=2,ell=257,lambda=0
f=63               f=163                  f=277                   f=279
P=x^3-21*x-35      P=x^3+x^2-54*x-169     P=x^3+x^2-92*x+236      P=x^3-93*x+217
v0=0 []            v0=2 [2,2]             v0=2 [2,2]              v0=0 []
v1=2 [2,2]         v1=4 [4,4]             v1=4 [2,2,2,2]          v1=2 [2,2]
v2=2 [2,2]         v2=4 [4,4]             v2=6 [2,2,2,2,2,2]      v2=4 [4,4]
v3=2 [2,2]         v3=4 [4,4]             v3=6 [2,2,2,2,2,2]            ?        
\end{verbatim}\ns
\ft\begin{verbatim}
f=333              f=349                  f=397                   f=547 
P=x^3-111*x+37     P=x^3+x^2-116*x-517    P=x^3+x^2-132*x-544     P=x^3+x^2-182*x-81
v0=0 []            v0=2 [2,2]             v0=2 [2,2]              v0=2 [2,2]
v1=4 [4,4]         v1=4 [2,2,2,2]         v1=2 [2,2]              v1=4 [2,2,2,2]
v2=4 [4,4]         v2=4 [2,2,2,2]         v2=2 [2,2]              v2=6 [4,4,2,2]
\end{verbatim}\ns
\ft\begin{verbatim}
p=3,ell=109,lambda=0
f=1687               f=1897               f=2359                f=5383
x^3+x^2-562*x-4936   x^3+x^2-632*x-4075   x^3+x^2-786*x+7776    x^3+x^2-1794*x+17744
v0=1 [3]             v0=1 [3]             v0=2 [3,3]            v0=3 [9,3]
v1=1 [3]             v1=1 [3]             v1=2 [3,3]            v1=3 [9,3]
\end{verbatim}\ns

As for the quadratic case, we may use $k_1 = k(\sqrt {257 \cdot 449 \cdot 577})$ 
for $p=2$ with $\order S = 9$, $\rho = 2$, $\lambda = 6$. The contribution of the 
$2$-tower over $\Q$ is given by the $2$-stability from the structures 
${\sf [\ ], [2,2]}$. We find a great lot of $\lambda$-stabilities with the structures 
${\sf [\ ], [2,2,2,2,2,2]}$ or ${\sf [2,2], [4,4,2,2,2,2]}$ and very few exceptions 
for which one does not know if a $\lambda$-stabilization does exist for $n>1$ 
(case of ${\sf f=1531}$ below):\par
\ft\begin{verbatim}
f=171,P=x^3-57*x-152    f=349,P=x^3+x^2-116*x-517    f=1531,P=x^3+x^2-510*x-567 
C0=[]                   C0=[2,2]                     C0=[]
C1=[2,2,2,2,2,2]        C1=[4,4,2,2,2,2]             C1=[4,4,2,2,2,2]
\end{verbatim}\ns

\section{Other examples of applications and Remarks}
Assume the total ramification of $S \ne \es$ in the cyclic $p$-tower $K/k$ and the main
condition $\order{\mathcal X}_1 = \order {\mathcal X} \cdot p^\lambda$ with
$\lambda = \max\,(0, \order S - 1 - \rho)$ (Definition \ref{Lambda}); remember 
that it is easy to ensure that $\tor_\Z(E) \subset \Norm_{K/k}(K^\times)$ 
(Remark \ref{torsion}):

\smallskip
({\bf a}) Let $\wt k/k$ be a $\Z_p$-extension with Iwasawa's invariants 
$\wt \lambda$, $\wt \mu$, $\wt \nu$. We then have a set 
of $p$-places $S$ with $1 \leq \order S \leq [k : \Q] = r_1+2\,r_2$ and 
$\lambda = \max\,(0, \order S - r_1 - r_2) \in [0, r_2]$; one gets the formula 
$\order{\mathcal X}_n = \order{\mathcal X} \cdot p^{\lambda \cdot n}$ 
for all $n$, as soon as $\order{\mathcal X}_1 = 
\order {\mathcal X} \cdot p^\lambda$ holds, and in that case, $\wt \lambda$ is 
equal to $\lambda$ and $\wt \mu=0$; the Iwasawa formula being 
$\order{\mathcal X}_n = p^{\lambda \cdot n + \nu}$ with $p^{\nu} := 
\order{\mathcal X}$ (recall that this may apply only from a larger layer $k'$).
For CM-fields $k$ with $p$ totally split in $k$, $\lambda = \frac{1}{2}\,[k : \Q]$.

\smallskip
When $k$ is totally real, Greenberg's conjecture \cite{Gre} for $p$-class groups 
($\wt \lambda = \wt \mu = 0$) 
holds if and only if $\order{\mathcal C}_{n_0+1} = \order {\mathcal C}_{n_0}$
is fulfilled for some $n_0 \geq 0$ ($\lambda'$-stabilization from $k' = k_{n_0}$
with $\lambda' = \wt \lambda = 0$), which may be checked if $n_0$ is not too 
large by chance (as in \cite{KS} for real quadratic fields, 
$p=3$ non-split, or in various works of Taya \cite{Ta}, Fukuda, Komatsu).

({\bf b}) Taking ${\mathcal H}_N = {\mathcal C}_N^{p^r}$ may give the 
$\lambda(r)$-stability of the ${\mathcal C}_n/{\mathcal C}_n^{p^r}$'s,
for some $\lambda(r)$, whence $\rk_p({\mathcal C}_n) = 
\rk_p({\mathcal C}) + \lambda(1) \cdot n$ for all $n$, as soon as this equality holds
for $n=1$ (where $\rk_p(A):=\dim_{\F_p}(A/A^p)$). For instance, let $k=\Q(\sqrt{-m})$ 
and $K \subset k (\mu_{257})$ and consider the stability 
of the $2$-ranks when $\rk_2({\mathcal C}) \geq 1$ ($\lambda(1)=0$ since 
$\order S \leq 2$ and $\rho = t \geq 1$ when ${\mathcal C} \ne 1$); 
we find many examples:

\smallskip
For $m \in \{$-15, -35, -95, -111, -123, -159, -215, -235, -259, -287, -291, 
-303, -327, -355, -371, -415, -447,$\,\ldots \}$ the $2$-rank is $1$ and stable from $n=0$.

\smallskip
For $m \in \{$-39, -87, -91,  -155, -183, -195, -203, -247, -267, -295, -339, -395, -399,
-403, -427, -435, -455, -471,$\,\ldots \}$ the $2$-rank (equal to $2$ or $3$) is stable 
from $n=1$.

\smallskip
For $m=-55$ (successive structures ${\sf [4], [4,4], [8,4,4,2], [16,8,8,4]}$),
$m=-115$ (structures ${\sf [2], [2,2], [2,2,2,2], [8,8,8,8]}$), we have
two examples of stabilization from $n=2$ (rank $4$).

\smallskip
We see that the orders are not necessarily $\lambda$-stable 
from $n=0$ ($\lambda = \order S - 1$):\par
\ft\begin{verbatim}
m=-15       m=-403           m=-259      m=-95       m=-195             m=-895     
v0=1 [2]    v0=1  [2]        v0=2 [4]    v0=3 [8]    v0=2  [2,2]        v0=4 [16]  
v1=2 [4]    v1=5  [8,2,2]    v1=3 [8]    v1=4 [16]   v1=4  [4,2,2]      v1=5 [32]  
v2=3 [8]    v2=8  [16,4,4]   v2=4 [16]   v2=5 [32]   v2=9  [8,8,8]      v2=6 [64]  
v3=4 [16]   v3=11 [32,8,8]   v3=5 [32]   v3=6 [64]   v3=12 [16,16,16]   v3=7 [128] 
\end{verbatim}\ns

If $k \subseteq K \subseteq \wt k$, for a $\Z_p$-extension $\wt k$ of $k$, and
if $\lambda(1) = 0$ (e.g., $k$ is any quadratic field and  ${\mathcal C} \ne 1$)
then $\rk_p({\mathcal C}_n) = \rk_p({\mathcal C})$ as soon as this is true for $n=1$
(this may holds from some layer). See \cite[Section 7]{Gra+} for many examples.
The following ones (among others) show a great tendency to stabilization of the rank:\par
\ft\begin{verbatim}
p=2 m=-93 [2,2],[2,2,2],[2,2,2,2,2],[2,2,2,2,2,2,2,2,2],[2,4,4,4,4,4,4,8,8]
p=3 m=6559 [9],[3,27],[9,27]; m=-362 [9],[3,27],[9,81]; m=-929 [9],[3,9,27],[9,27,81]
\end{verbatim}\ns

({\bf c}) If $K/\Q$ is abelian with $p \nmid [k : \Q]$, 
$\lambda$-stabilities may hold for the isotopic $\Z_p[\Gal(k/\Q)]$-components 
of the ${\mathcal X}_n$'s, using fixed points formulas \cite{Jau0, Jau1}. 

\smallskip
({\bf d}) Remarks. Let's consider, for an arbitrary $k$, the family 
$\{{\mathcal C}_n\}$ assuming the context of Lemma \ref{albert} and Remark 
\ref{torsion}; so $\lambda = \max\,(0, \order S - 1 - \rho)$ with $\rho =  r_1+r_2 - 1$.

\smallskip\noindent
\qquad {\bf (i)} The condition of $\lambda$-stability, $\order {\mathcal C}_1 = 
\order {\mathcal C} \cdot p^{\lambda}$, implies ${\mathcal C}_n^{G_n} = 
{\mathcal C}_n$ for all $n \geq 1$, but is not equivalent. Indeed, let $k = \Q(\sqrt m)$, 
$m>0$ such that $17$ splits in $k$, and let $K=kL$, where $L$ is the subfield of 
$\Q(\mu_{17})$ of degree $8$; whence $\rho = 1$, $\lambda = 0$. 
Assume that $\omega_1(\varepsilon) = 1$ ($\varepsilon$ norm in $k_1/k$);
then, from Chevalley's formula, $\order {\mathcal C}_1^{G_1} = \order {\mathcal C} \cdot p \ne 
\order {\mathcal C}$. For $m=26$, $\varepsilon = 3+\sqrt {26}
\equiv 5^2 \pmod {17}$ in the completion $\Q_{17}$; one obtains the structures 
$[2], [2,2], [4,2]$ giving $\order {\mathcal C}=2$ and $\order {\mathcal C}_1 
= \order {\mathcal C}_1^{G_1} = 4$.

\smallskip\noindent
\qquad {\bf (ii)}
When $\order {\mathcal C}_1 = \order {\mathcal C} \cdot p^{\lambda}$
does not hold (whence 
$\order {\mathcal C}_1 > \order {\mathcal C} \cdot p^{\lambda}$), 
we observe that orders and $p$-ranks are
unpredictable and possibly unlimited in the tower; indeed, put 
${\mathcal C}_n^i := \{c \in {\mathcal C}_n,\, c^{(1-\sigma_n)^i} = 1\} 
=: \cl_n({\mathcal I}_n^i)$, $i \geq 1$; we have, from \eqref{totram}, 
$\order \big({\mathcal C}_n^{i+1}/{\mathcal C}_n^i\big) = 
\frac{\order {\mathcal C}}{\order \Norm_n({\mathcal C}_n^i)}\! \times\!
\frac{p^{n\,(\oorder S -1)}}{\order \omega_n(\Lambda^i)}$, 
where $\Lambda^i := \{x \in k^\times, \,  (x) \in \Norm_n({\mathcal I}_n^i)\}$;
in general these two factors are non-trivial giving
$\order {\mathcal C}_n^{i+1} > \order {\mathcal C}_n^i$ and the filtration increases.

\section{Behavior of other invariants in cyclic $p$-towers}
Let $\Sigma_p$ be the set of $p$-places of $k$.
In relation with class field theory and cohomology of the Galois group
${\mathcal G}_{k,\Sigma_p}$ of the maximal $\Sigma_p$-ramified (= $p$-ramified)
pro-$p$-extension of $k$ and of its local analogues 
${\mathcal G}_{k_{\mathfrak p}}$, the fundamental 
Tate--Chafarevich groups \cite[Theorem 3.74]{K}, \cite[\S\,1]{Ng}: 
$$\hbox{$\Cha_k^i(\Sigma_p) := {\rm Ker} \big 
[{\Hom}^i ({\mathcal G}_{k,\Sigma_p},\F_p) \rightarrow \bigoplus_{{\mathfrak p}\in \Sigma_p} \, 
{\Hom}^i ({\mathcal G}_{k_{\mathfrak p}},\F_p) \big]$, $i = 1, 2$,}$$ 
can be examined, in the same way in towers, as for generalized class groups. 
This is possible for $\Cha_k^1(\Sigma_p) \simeq ({\mathcal C}_k/\cl_k(\Sigma_p)) \otimes \F_p$,
but $\Cha_k^2(\Sigma_p)$ does not appear immediately as a generalized class group
and is not of easy computational access (see \cite{HMR} for a thorough study 
of $\Cha_k^2(S)$ when $\Sigma_p \not\subset S$; the case $\Sigma_p \subseteq S$
easily reduces to $S=\Sigma_p$ under Leopoldt's conjecture 
\cite[Theorem III.4.1.5]{Gra3}). 
Nevertheless, we have an arithmetic
interpretation, much more easily computable and having well-known
properties, with the key invariant, closely related to $\Cha_k^2(\Sigma_p)$,
$${\mathcal T}_k := \tor_{\Z_p} ({\mathcal G}_{k,\Sigma_p}^\ab)  \simeq 
\Hom^2({\mathcal G}_{k,\Sigma_p},\Z_p)^\ast \ \,\hbox{\cite[Th\'eor\`eme 1.1]{Ng}}$$ 
(see general rank computations in \cite[Theorem III.4.2 and Corollaries]{Gra3}).
Nevertheless, ${\mathcal T}_k$, linked to the residue at $s=1$ of 
$p$-adic $\zeta$-functions \cite{Co}, gives rise to some analogies with generalized 
class groups (due to reflection theorems \cite[Theorem II.5.4.5]{Gra3}).
See in \cite{Gra5} the interpretation of ${\mathcal T}_k$
with the $p$-class group ${\mathcal C}_k$ and the normalized 
$p$-adic regulator ${\mathcal R}_k$.
Since the deep aspects of these cohomology groups are due to the $p$-adic 
properties of global units (i.e., ${\mathcal R}_k$) we shall restrict ourselves to 
the totally real case under Leopoldt's conjecture. For any field $\kappa$, let 
$\kappa_\infty$ be its cyclotomic $\Z_p$-extension; then ${\mathcal T}_\kappa = 
\Gal((H_\kappa^\pr)^\ab/\kappa_\infty)$ where $H_\kappa^\pr$ is the maximal 
abelian $\Sigma_p$-ramified pro-$p$-extension of $\kappa$.

\smallskip
Let $K/k$ be a totally real, $S$-ramified, cyclic $p$-tower; 
denote by $S^\ta \subseteq S$ the subset of tame places 
(when non-empty, {\it $S^\ta$ is assumed totally ramified}, 
but the subset of $p$-places may be arbitrary). 
For all extensions $L/F$ contained in $K/k$,
the transfer maps $\J_{L/F} : {\mathcal T}_F \to {\mathcal T}_L$ are injective 
\cite[Theorem IV.2.1]{Gra3} and,  since $L_\infty/F_\infty$ is totally $S^\ta$-ramified,
the norm maps $\Norm_{L/F} : {\mathcal T}_L \to {\mathcal T}_F$ are surjective.
In the particular case $L/F \subset k_\infty/k$, $S^\ta = \es$, but
$\Norm_{L/F}$ is still surjective (indeed, $F_\infty = L_\infty = k_\infty$ and 
$(H_k^\pr)^\ab \subseteq (H_F^\pr)^\ab \subseteq (H_L^\pr)^\ab$).

\begin{lemma}
Let $K/k$ be a totally real cyclic $p$-tower of degree $q \,p^N$, $p$-ramified 
or else totally ramified at $S^\ta \ne \es$, where $q=p$ or $4$; assume that
$k \cap \Q_\infty = \Q$. Then, for any $0 \leq n \leq n + m \leq N$, 
and $g_n^m := \Gal(k_{n+m}/k_n)$, $\order {\mathcal T}_{n+m}^{g_n^m} = 
\order {\mathcal T}_n \cdot p^{m\cdot \oorder S^\ta}$.
\end{lemma}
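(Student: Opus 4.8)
The plan is to mimic, for the torsion groups $\mathcal{T}_n$, exactly the strategy used for generalized $p$-class groups: replace Chevalley's formula \eqref{totram} by its $p$-ramification analogue (the ``fixed points formula'' for ${\mathcal T}$ in a cyclic $p$-extension), and then run the same norm/transfer bookkeeping. First I would record the relevant analogue of Chevalley's formula for ${\mathcal T}$: for a totally real cyclic $p$-extension $L/F$ (with $k\cap\Q_\infty=\Q$ so that the cyclotomic $\Z_p$-extensions stack up correctly), one has $\order{\mathcal T}_L^{\Gal(L/F)} = \order{\mathcal T}_F \cdot \prod_{v\in S^\ta(L/F)} e_v(L/F) \big/ (\hbox{a unit-index factor})$, where the ramification is now over the cyclotomic tower, so the tame places of $S$ contribute their full ramification index $[L:F]$ and the $p$-places contribute trivially (because $L_\infty/F_\infty$ is unramified at $p$ once one is above the cyclotomic layer). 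The normalized-regulator / Leopoldt input is what makes the ``unit'' correction term trivial in the totally real case; this is precisely the content cited from \cite[Theorem IV.2.1]{Gra3} and the surjectivity of $\Norm_{L/F}$ and injectivity of $\J_{L/F}$ already quoted in the excerpt.

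Next I would isolate the key free-module lemma already in hand, Lemma \ref{free}, applied to the Hasse-symbol-type map attached to ${\mathcal T}$ rather than to $\Lambda$. Concretely, the obstruction to $\order{\mathcal T}_{n+m}^{g_n^m}$ being as large as the naive bound $\order{\mathcal T}_n\cdot p^{m\cdot\oorder S^\ta}$ predicts is measured by the image of a homomorphism from a free $\Z_p$-module (here, roughly, a module built from the relevant units / the $\Z_p$-module playing the role of $X$) into a product of inertia-type groups over $S^\ta$. The crucial point is that in the totally real $p$-ramified setting the ``$X$'' has $\Z_p$-rank $r = r_1+r_2-1 = r_1-1$, and — this is the structural difference from the class-group case — the relevant symbol map is identically trivial because the global units are norms everywhere in the cyclotomic tower (no tame split obstruction survives), so the correction index is $1$. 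Hence the bound is attained with no ``$\max(0,\cdot)$'' truncation: the exponent is exactly $m\cdot\oorder S^\ta$, with no $\rho$ subtracted. I would verify this triviality by invoking the surjectivity of $\Norm_{L/F}$ on ${\mathcal T}$ together with the genus-type exact sequence, exactly as the $p$-class-group argument used $\Norm_{k_n/k}({\mathcal X}_n)={\mathcal X}$.

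Then the induction is immediate: having the one-step formula $\order{\mathcal T}_{n+1}^{g_n^{1}} = \order{\mathcal T}_n\cdot p^{\oorder S^\ta}$ for every layer, I would climb the tower layer by layer. Because $\Norm_{k_{n+m}/k_n}$ is surjective and $\J_{k_{n+m}/k_n}$ is injective on the ${\mathcal T}$'s, the same argument as in the proof of Theorem \ref{mt}(i) — comparing ${\mathcal T}_{n+m}^{g_n^m}$ with ${\mathcal T}_{n+m}^{(g_n^m)^p}$ and using that $\theta = 1+\sigma+\cdots+\sigma^{p-1}$ lies in the maximal ideal $(p,1-\sigma)$ of $\Z_p[g_n^m]$ — forces ${\mathcal T}_{n+m}^{g_n^m}$ to grow by exactly a factor $p^{\oorder S^\ta}$ per step, giving $\order{\mathcal T}_{n+m}^{g_n^m} = \order{\mathcal T}_n\cdot p^{m\cdot\oorder S^\ta}$ after $m$ steps. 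I would also note that the $q=4$ case (i.e.\ $p=2$) is handled by the same formulas provided one is careful that $k\cap\Q_\infty=\Q$ rules out the anomalous behavior of the $2$-adic cyclotomic tower.

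The main obstacle I anticipate is pinning down the exact form of the ${\mathcal T}$-analogue of Chevalley's formula and, in particular, justifying rigorously that the unit-index correction term is trivial for \emph{all} layers simultaneously and not just in a limit — this is where Leopoldt's conjecture, the injectivity of the transfers $\J_{L/F}$, and the precise (non-)ramification of $p$ in the cyclotomic direction all have to be combined. Once that ``genus formula for ${\mathcal T}$'' with trivial error term is in place, everything else is a mechanical repetition of the module-theoretic arguments (Lemma \ref{free} and the $(p,1-\sigma)$-maximal-ideal trick) already developed for the generalized $p$-class groups.
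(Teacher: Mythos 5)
Your proposal has a genuine gap at exactly the point you flag as the ``main obstacle'': the analogue of Chevalley's formula for ${\mathcal T}$ does not have the shape you posit, and the vanishing of its correction term is not caused by Leopoldt's conjecture or by global units being norms. The formula the paper actually invokes \cite[Theorem IV.3.3, Exercise 3.3.1]{Gra3} reads $\order {\mathcal T}_{n+m}^{g_n^m} = \order {\mathcal T}_n \cdot p^{m\cdot \oorder S^\ta}\cdot p^{R-m}$, where $R=\min_{{\mathfrak l}\in S^\ta}\,(m\,;\ \nu_{\mathfrak l}+\varphi_{\mathfrak l}+\gamma_{\mathfrak l})$ is governed by purely \emph{local} data at the tame places: $\nu_{\mathfrak l}$ measures $q^{-1}\log(\ell)$ (essentially the $p$-adic valuation of $\ell-1$), $\varphi_{\mathfrak l}$ the residue degree of ${\mathfrak l}$ in $k/\Q$, and $\gamma_{\mathfrak l}$ its splitting in the tower. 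This deficiency term arises because ${\mathcal T}$ is defined relative to the cyclotomic $\Z_p$-extension, so part of the inertia at a tame ${\mathfrak l}$ can be absorbed into $\Gal(k_\infty/k)$ according to the depth of its Frobenius there; no unit-norm index is involved. The entire proof then consists in showing $R=m$: the \emph{existence} of a cyclic tower of degree $q\,p^N$ totally ramified at ${\mathfrak l}$ forces ${\mathcal N}({\mathfrak l})\equiv 1 \pmod{q\,p^N}$ (the inertia group embeds in $\F_{{\mathcal N}({\mathfrak l})}^\times$), whence $\nu_{\mathfrak l}+\varphi_{\mathfrak l}\geq N\geq m$ and $\gamma_{\mathfrak l}=0$. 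This is precisely where the hypotheses ``degree $q\,p^N$'', ``$S^\ta$ totally ramified'', ``$k\cap\Q_\infty=\Q$'' and the restriction $n+m\leq N$ are consumed; under your reasoning none of them would be needed, which is a sign the mechanism has been misidentified.

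A secondary problem is the proposed induction. The one-step identity $\order {\mathcal T}_{n+1}^{g_n^1}=\order{\mathcal T}_n\cdot p^{\oorder S^\ta}$ does not chain to the $m$-step statement, because ${\mathcal T}_{n+m}^{g_n^m}$ is not obtained by iterating fixed points of successive relative layers of the \emph{same} group unless one already controls the intermediate modules; and the Lemma \ref{free} / maximal-ideal $(p,1-\sigma)$ machinery is designed, in Theorem \ref{mt}, to upgrade an order computation to ${\mathcal X}_n={\mathcal X}_n^{G_n}$ \emph{under the extra stability hypothesis} $\order{\mathcal X}_1=\order{\mathcal X}\cdot p^{\lambda}$. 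The present lemma assumes no such hypothesis and only computes $\order{\mathcal T}_{n+m}^{g_n^m}$, which falls out in one step from the fixed-point formula once $R=m$ is known; the paper in fact emphasizes immediately afterwards that $\lambda$-stability generally \emph{fails} for the ${\mathcal T}_n$ (their ranks explode), so an argument forcing the fixed submodule to grow by exactly $p^{\oorder S^\ta}$ per step while exhausting ${\mathcal T}_{n+m}$ would prove too much.
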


\begin{proof}
We have (with our notations) $\order {\mathcal T}_{n+m}^{g_n^m} = 
\order {\mathcal T}_n \cdot p^{m\cdot \oorder S^\ta} \!\!\cdot p^{R - m}$ 
\cite[Theorem IV.3.3, Exercise 3.3.1]{Gra3}, 
$R = \min_{{\mathfrak l} \in S^\ta}(m \,; \ldots, 
\nu_{\mathfrak l}+\varphi_{\mathfrak l}+\gamma_{\mathfrak l} , \ldots)$ 
(with $p^{\nu_{\mathfrak l}} \sim q^{-1}\log(\ell)$ where ${\mathfrak l} \cap \Z 
= \ell \,\Z$, $p^{\varphi_{\mathfrak l}} \sim$ residue degree of ${\mathfrak l}$ 
in $k/\Q$, $\gamma_{\mathfrak l} =0$ since $S^\ta$ does not split in $K/k$). 
When $S^\ta \ne \es$, {\it the existence} (see \cite[V, Theorem 2.9]{Gra3} for a 
characterization) of a cyclic $p$-tower $K/k$ implies 
${\mathcal N} ({\mathfrak l}) \equiv 1 \pmod {q\,p^N}$, 
for all ${\mathfrak l} \in S^\ta$, where 
${\mathcal N}$ is the absolute norm; indeed, the inertia group of the local extension 
$K_{\mathfrak L}/k_{\mathfrak l}$, ${\mathfrak L} \mid {\mathfrak l}$, is
isomorphic to a subgroup of $\F_{\!\!{\mathcal N} ({\mathfrak l})}^\times$.
So, we have $\nu_{\mathfrak l}+\varphi_{\mathfrak l} \geq N$ and $R=m$; this 
explains the limitation of the level $n$ to get the given fixed points formula in any case.
If $k \cap \Q_\infty = \Q_{n_0}$, one must take $\ell  \equiv 1 \pmod {q p^{n_0} \cdot p^N}$
\end{proof}

If a $\lambda$-stability does exist with ${\mathcal T}_n = {\mathcal T}_n^{G_n}$
of order $\order {\mathcal T} \cdot p^{\lambda \cdot n}$, then $\lambda = \order S^\ta$
(from the relation $\order {\mathcal T}_1 = \order {\mathcal T}\,p^{\oorder S^\ta}$ 
 giving ${\mathcal T}_n^{G_n} = {\mathcal T}_n^{G_n^p}$).
However, the properties of the groups ${\mathcal T}_n$ will give sometimes
non-trivial Iwasawa's $\wt \mu$-invariants in $k_\infty/k$ and very large ranks 
in cyclic $p$-towers with tame ramification; moreover $\wt \lambda$ may be much 
larger than $\lambda=\order S^\ta$ which explains that the $\lambda$-stability 
problem is less pertinent in this framework (few examples). A reason 
(based on the properties,  in $k_{n+m}/k_n$, of the norms 
$\Norm_n^m$ (surjectivity) and transfer maps $\J_n^m$ (injectivity)) is 
the following, where $g_n^m := \Gal(k_{n+m}/k_n)$:

\smallskip
From $1 \to  \J_n^m{\mathcal T}_n \to
{\mathcal T}_{n+m} \to {\mathcal T}_{n+m}/ \J_n^m {\mathcal T}_n \to 1$, we get:

\medskip
\centerline{$1  \to  {\mathcal T}_{n+m}^{g_n^m} / \J_n^m {\mathcal T}_n  \to  
({\mathcal T}_{n+m}/ \J_n^m {\mathcal T}_n)^{g_n^m}  \to  {\rm H}^1(g_n^m, \J_n^m {\mathcal T}_n)
\to  {\rm H}^1(g_n^m, {\mathcal T}_{n+m})$, }

\medskip\noindent
where ${\rm H}^1(g_n^m, \J_n^m {\mathcal T}_n) = (\J_n^m {\mathcal T}_n) [p^m]
\simeq {\mathcal T}_n[p^m]$, and:

\medskip
\centerline{$\order {\rm H}^1(g_n^m, {\mathcal T}_{n+m}) 
= \order {\rm H}^2(g_n^m, {\mathcal T}_{n+m})$
$= \order ({\mathcal T}_{n+m}^{g_n^m} / \J_n^m {\mathcal T}_n) = p^{m \cdot \oorder S^\ta}$,}

\medskip\noindent
giving an exact sequence of the form
$1\to A \to ({\mathcal T}_{n+m}/ \J_n^m {\mathcal T}_n)^{g_n^m}  
\to {\mathcal T}_n[p^m] \to A'$, with $\order A' = \order A = p^{m\cdot\oorder S^\ta}$. 
We then obtain the following inequality:
\begin{equation}\label{mu}
\order {\mathcal T}_{n+m}\geq \order {\mathcal T}_n \cdot \order {\mathcal T}_n[p^m] ,
\end{equation}
more precise than $\order {\mathcal T}_{n+m} \geq \order {\mathcal T}_{n+m}^{g_n^m} =
 \order {\mathcal T}_n \cdot p^{m \cdot \order S^\ta}$, when the $p$-rank of ${\mathcal T}_n$
increases, since $\order S^\ta$ is a constant.

\smallskip
For $m=1$ one gets $\order {\mathcal T}_{n+1}\geq 
\order {\mathcal T}_n \cdot p^{\rk_p({\mathcal T}_n)}$.
This tendency to give large orders and $p$-ranks is enforced by the presence of tame
ramification (see \S\,\ref{bigrank} for numerical illustrations) and leads to the vast 
theory of pro-$p$-extensions and Galois cohomology of pro-$p$-groups in a broader 
context than $p$-ramification theory, which is not our purpose (see \cite{HM} 
for more information). This explains that, for $\lambda = \order S^\ta = 0$, 
Corollary \ref{capitulation} never applies for ${\mathcal T}$ 
(except ${\mathcal T}=1$ giving ${\mathcal T}_n=1$ for all~$n$).

\smallskip
It would be interesting to give an analogous study for the Jaulent
logarithmic class group \cite{Jau3} since its capitulation in the cyclotomic 
$\Z_p$-extension of $k$ (real) characterizes Greenberg's conjecture.

\begin{remark}
For $p$-class groups ${\mathcal C}_n$, in totally real cyclic $p$-towers, 
one obtains an inequality similar to \eqref{mu} provided one replaces 
${\mathcal C}_n$ by $\J_n^m{\mathcal C}_n$, where $\J_n^m$ is 
{\it in general non-injective}, a crucial point in this comparison. 
But for CM-fields $k$ and $p \ne 2$, {\it $\J_n$ is injective on the ``minus parts''},
giving $\order {\mathcal C}_{n+m}^-\geq \order {\mathcal C}_n^- 
\cdot \order {\mathcal C}_n^-[p^m]$, which explains the results for 
imaginary quadratic fields in \S\,\ref{quad}, especially for $m=-782$ and 
$m=-858$, for which we can predict $\order{\mathcal C}_4$ larger than 
$2^{54}$ and $2^{46}$, respectively. 
\end{remark}

\begin{remark} \label{overQ}
One can wonder about the contribution of the $p$-towers 
contained in $\Q(\mu_\ell)$, $\ell \equiv 1 \pmod {q p^N}$,
used in the compositum with $k$. We have the following 
examples of ${\mathcal T}_n$ for $p=2,3$ and $S = S_\ell$;
let ${\sf vn, rkn}$ denote the $p$-valuation, the $p$-rank, of 
${\mathcal T}_n$, respectively; the parameter $E$ must be chosen such
that $E > e+1$, where $p^e$ is the exponent of ${\mathcal T}_n$.\par
\ft\begin{verbatim}
{p=3;ell=17497;E=8;print("p=",p," ell=",ell);for(n=0,4,Qn=polsubcyclo(ell,p^n);
kn=bnfinit(Qn,1);knmod=bnrinit(kn,p^E);Tn=knmod.cyc;\\computation of Tn
T=List;d=matsize(Tn)[2];rkn=0;vn=0;for(j=1,d-1,c=Tn[d-j+1];w=valuation(c,p);
if(w>0,vn=vn+w;rkn=rkn+1;listinsert(T,p^w,1)));\\computation of vn and rkn
print("v",n,"=",vn," rk",n,"=",rkn," ",T))}
\end{verbatim}\ns
\ft\begin{verbatim}
p=2,ell=17        p=2,ell=7681      p=2,ell=257 
v0=0 rk0=0 []     v0=0 rk0=0 []     v0=0  rk0=0  []  
v1=1 rk1=1 [2]    v1=1 rk1=1 [2]    v1=3  rk1=1  [8]   
v2=2 rk2=1 [4]    v2=2 rk2=1 [4]    v2=7  rk2=3  [16,4,2] 
v3=3 rk3=1 [8]    v3=3 rk3=1 [8]    v3=13 rk3=7  [32,8,2,2,2,2,2]  
v4=4 rk4=1 [16]   v4=4 rk4=1 [16]   v4=23 rk4=15 [64,16,2,2,2,2,2,2,2,2,2,2,2,2,2]  
                                    v5=38 rk5=15 [128,32,4,4,4,4,4,4,4,4,4,4,4,4,4] 
\end{verbatim}\ns
\ft\begin{verbatim}
p=3,ell=109       p=3,ell=163       p=3,ell=487         p=3,ell=17497     
v0=0 rk0=0 []     v0=0 rk0=0 []     v0=0 rk0=0 []       v0=0 rk0=0 [] 
v1=1 rk1=1 [3]    v1=1 rk1=1 [3]    v1=2 rk1=2 [3,3]    v1=3 rk1=2 [9,3] 
v2=2 rk2=1 [9]    v2=2 rk2=1 [9]    v2=4 rk2=2 [9,9]    v2=6 rk2=3 [27,9,3] 
v3=3 rk3=1 [27]   v3=3 rk3=1 [27]    
\end{verbatim}\ns
\end{remark}

\subsection{Case of the cyclotomic $\Z_p$-extensions $k_\infty/k$ 
of $k$ real}\label{rankSp}
The Iwasawa invariants $\wt \lambda$, $\wt \mu$, $\wt \nu$, attached to 
$\ds \limproj {\mathcal T}_n$ satisfy $\wt \lambda = \wt \mu = 0$
if and only if ${\mathcal T} = 1$ (in which case $\wt \nu = 0$).
Indeed, if $\wt \lambda = \wt \mu=0$, $\order {\mathcal T}_n= p^{\wt \nu}$ 
for all $n \gg 0$; but \eqref{mu}, with $m=1$, gives $\wt \nu \geq \wt \nu + r_n$, 
where $r_n := \rk_p({\mathcal T}_n)$, whence $r_n=0$; but $r_n \geq r$,
so $r=0$ ($\J_n$ is injective). The reciprocal comes from
${\mathcal T}_n^{G_n} \simeq {\mathcal T}$ since $S^\ta = \es$. 
Thus a $\lambda$-stability does exist only for $\lambda=0$, which is 
excluded if ${\mathcal T} \ne 1$.
Computations suggest that the true Iwasawa parameters $\wt \lambda$, 
$\wt \mu$ may be often non-zero, even if few levels are accessible with 
PARI/GP; often there is stability for the $p$-class groups ${\sf Cn}$
($\order{\sf Sp}$ is the number of $p$-places of $k=\Q(\sqrt m)$):\par
\ft\begin{verbatim}
{p=3;E=8;N=4;bm=2;Bm=10^3;for(m=bm,Bm,if(core(m)!=m,next);P=x^2-m;
Sp=1;if(kronecker(m,p)==1,Sp=2);print();print("p=",p," m=",m," #Sp=",Sp);
for(n=0,N,if(p==2,Qn=x;for(k=1,n,Qn=Qn^2-2));if(p!=2,Qn=polsubcyclo(p^(n+1),p^n));
Pn=polcompositum(Qn,P)[1];kn=bnfinit(Pn,1);Cn=kn.cyc;C=List;d=matsize(Cn)[2];
for(j=1,d,c=Cn[j];w=valuation(c,p);if(w>0,listinsert(C,p^w,1)));\\p-class group Cn
knmod=bnrinit(kn,p^E);rkn=0;vn=0;Tn=knmod.cyc;\\computation of Tn
T=List;d=matsize(Tn)[2];for(j=1,d-1,c=Tn[d-j+1];w=valuation(c,p);
if(w>0,vn=vn+w;rkn=rkn+1;listinsert(T,p^w,1)));\\computation of vn and rkn
print("v",n,"=",vn," rk",n,"=",rkn," T",n,"=",T," C",n,"=",C)))}
\end{verbatim}\ns
\ft\begin{verbatim}
p=2
m=33,#Sp=2            m=41,#Sp=2             m=217,#Sp=2
v0=1 rk0=1 [2]  C0=[] v0=4 rk0=1 [16] C0=[]  v0=2  rk0=2 [2,2]                 C0=[]
v1=4 rk1=1 [16] C1=[] v1=5 rk1=1 [32] C1=[2] v1=4  rk1=3 [4,2,2]               C1=[2]
v2=5 rk2=1 [32] C2=[] v2=6 rk2=1 [64] C2=[4] v2=8  rk2=5 [4,4,4,2,2]           C2=[2]
v3=6 rk3=1 [64] C3=[] v3=7 rk3=1 [128]C3=[8] v3=15 rk3=9 [8,8,8,2,2,2,2,2,2]   C3=[2]
v4=7 rk4=1 [128]C4=[] v4=8 rk4=1 [256]C4=[8] v4=24 rk4=9 [16,16,16,4,4,4,4,4,4]C4=[2]
\end{verbatim}\ns
\ft\begin{verbatim}
m=193,#Sp=2                                           m=69,#Sp=1
v0=1  rk0=1  [2] C0=[]                                v0=2  rk0=1 [4]     C0=[]
v1=2  rk1=2  [2,2] C1=[]                              v1=4  rk1=2 [4,4]   C1=[]
v2=4  rk2=4  [2,2,2,2] C2=[]                          v2=6  rk2=2 [8,8]   C2=[]
v3=8  rk3=8  [2,2,2,2,2,2,2,2] C3=[]                  v3=8  rk3=2 [16,16] C3=[]
v4=16 rk4=15 [4,2,2,2,2,2,2,2,2,2,2,2,2,2,2] C4=[] 
\end{verbatim}\ns
\ft\begin{verbatim}
p=3
m=62,#Sp=1              m=103,#Sp=2                  m=1714,#Sp=2
v0=1 rk0=[3]    C0=[]   v0=1 rk0=1 [3]    C0=[]      v0=2 rk0=2  [3,3]      C0=[3]
v1=3 rk1=[9,3]  C1=[]   v1=3 rk1=2 [9,3]  C1=[3,3]   v1=5 rk1=4  [9,3,3,3]  C1=[3,3]
v2=5 rk2=[27,9] C2=[]   v2=5 rk2=2 [27,9] C2=[3,3]   v2=9 rk2=4  [27,9,9,9] C2=[9,3]
\end{verbatim}\ns
Formula \eqref{mu} predicts $\order {\mathcal T}_5 \geq  
2^{16+15} = 2^{31}$ for $m=193$, giving possibly $\wt \mu = 1$.

\subsection{Case of real cyclic $p$-towers with tame ramification}\label{bigrank}
In the case of $S_\ell$-ramified cyclic $p$-towers, one gets few examples 
of $\lambda$-stability (for $\lambda = \order S_\ell$) and mainly spectacular groups 
${\mathcal T}_n$; for example let $k=\Q(\sqrt m)$ and $K = k (\mu_{257})$ with $p=2$, 
then $K = k (\mu_{109})$ and $K = k (\mu_{163})$ with $p=3$:

\smallskip
(i) Case $p=2$.\par
\ft\begin{verbatim}
{p=2;ell=257;N=3;E=8;bm=2;Bm=10^3;for(m=bm,Bm,if(core(m)!=m||Mod(m,ell)==0,next);
P=x^2-m;S=1;if(kronecker(m,ell)==1,S=2);print("p=",p," ell=",ell," m=",m," #S=",S);
for(n=0,N,Pn=polcompositum(polsubcyclo(ell,p^n),P)[1];kn=bnfinit(Pn,1);\\layer kn
Cn=kn.cyc;C=List;d=matsize(Cn)[2];for(j=1,d,c=Cn[j];w=valuation(c,p);
if(w>0,listinsert(C,p^w,1)));\\p-class group Cn
knmod=bnrinit(kn,p^E);Tn=knmod.cyc;T=List;d=matsize(Tn)[2];\\computation of Tn
rkn=0;vn=0;for(j=1,d-1,c=Tn[d-j+1];w=valuation(c,p);if(w>0,rkn=rkn+1;vn=vn+w;
listinsert(T,p^w,1)));print("v",n,"=",vn," rk",n,"=",rkn," ",T," C",n,"=",C)))}
\end{verbatim}\ns
\ft\begin{verbatim}
p=2,ell=257,m=17,#S=2
v0=1  rk0=1  [2] C0=[]
v1=7  rk1=3  [16,4,2] C1=[]
v2=16 rk2=7  [32,8,8,4,2,2,2] C2=[]
v3=31 rk3=15 [64,16,16,4,4,4,4,4,2,2,2,2,2,2,2] C3=[]
v4=58 rk4=31 [128,32,32,8,8,8,8,4,4,4,4,4,2,2,2,2,2,2,2,2,2,2,2,2,2,2,2,2,2,2,2] C4=[]
\end{verbatim}\ns
\ft\begin{verbatim}
p=2,ell=257,m=34,#S=2
v0=1  rk0=1  [2] C0=[2]
v1=8  rk1=3  [16,8,2] C1=[4,2]
v2=20 rk2=7  [32,16,8,8,4,4,2] C2=[4,2,2,2]
v3=37 rk3=15 [64,32,16,16,8,4,4,4,4,4,2,2,2,2,2] C3=[4,2,2,2,2,2,2,2]
v4=65 rk4=27 [128,64,32,32,16,8,8,8,8,8,4,4,4,4,4,4,2,2,2,2,2,2,2,2,2,2,2] 
                                                             C4=[8,4,4,2,2,2,2,2,2,2]
\end{verbatim}\ns

Strikingly, for $m=17$, one verifies the formula
$\order {\mathcal T}_n = 2^{3 \cdot n + 3 \cdot 2^n -2}$ for $n \in [0, 4]$ and
\eqref{mu} predicts $\order {\mathcal T}_5 \geq 2^{89}$; for $m=34$,
\eqref{mu} predicts $\order {\mathcal T}_5 \geq 2^{92}$ (or much more).

\smallskip
(ii) Case $p=3$.\par
\ft\begin{verbatim}
p=3,ell=163,m=2,#S=1     p=3,ell=163,m=10,#S=2       p=3,ell=109,m=29,#S=2
v0=0 rk0=1 []  C0=[]     v0=0 rk0=0 []    C0=[]      v0=2 rk0=1 [9]    C0=[]
v1=1 rk1=1 [3] C1=[]     v1=2 rk1=2 [3,3] C1=[]      v1=4 rk1=2 [27,3] C1=[]
v2=2 rk2=1 [9] C2=[]     v2=4 rk2=2 [9,9] C2=[]      v2=6 rk2=2 [81,9] C2=[]
\end{verbatim}\ns
\ft\begin{verbatim}
p=3,ell=109,m=15,#S=2               p=3,ell=109,m=145,#S=2
v0=1  rk0=1 [3]         C0=[]       v0=0  rk0=0 []                 C0=[]
v1=6  rk1=4 [9,9,3,3]   C1=[]       v1=8  rk1=3 [81,27,3]          C1=[]
v2=10 rk2=4 [27,27,9,9] C2=[]       v2=15 rk2=7 [243,81,9,3,3,3,3] C2=[]
\end{verbatim}\ns
\ft\begin{verbatim}
p=3,ell=163,m=15,#S=2       p=3,ell=163,m=79 #S=1        p=3,ell=163,m=118,#S=2
v0=1 rk0=1 [3]       C0=[]  v0=2 rk0=1 [9]        C0=[3] v0=0 rk0=0 []         C0=[]
v1=5 rk1=4 [9,3,3,3] C1=[3] v1=7 rk1=4 [27,9,3,3] C1=[9] v1=4 rk1=3 [9,3,3]    C1=[]
v2=9 rk2=4 [27,9,9,9]C2=[9] v2=11rk2=4 [81,27,9,9]C2=[9] v2=8 rk2=4 [27,9,9,3] C2=[]
\end{verbatim}\ns

See Remark \ref{overQ} to compute the contributions of the towers over $\Q$.

\begin{remark} Let $K/k$ be a totally real, $p$-ramified (whence $S \subseteq 
\Sigma_p$, arbitrary), cyclic $p$-tower (i.e., $k \subseteq K \subseteq (H_k^\pr)^\ab$, 
where $\Gal((H_k^\pr)^\ab/k) \simeq \Z_p \oplus {\mathcal T}_k$). Then if the 
$p$-ranks of $k$ and $k_1$ coincide, the $p$-rank is constant in the tower 
(indeed, this corresponds to the stability of ${\mathcal T}_n[p]$, $n \in [0, N]$, since
${\mathcal T}_n[p]^{G_n} = {\mathcal T}[p]$ and ${\mathcal T}_n[p]^{G_n^p} 
= {\mathcal T}_1[p]$ which leads to ${\mathcal T}_n[p] \simeq {\mathcal T}[p]$
for all $n$). Of course, this may occur only from some layer $k'$ and the
numerical data of \S\,\ref{rankSp} gives many examples.
\end{remark}

%\section*{Acknowledgment}

\end{document}